\newcommand*{\TheTitle}{A thorough study of Riemannian Newton's Method}
\DeclareMathOperator{\Gr}{Gr}
\DeclareMathOperator{\ST}{St}
\DeclareMathOperator{\Euc}{Euc}
\DeclareMathOperator{\Hel}{\widehat{H}_{el}}
\DeclareMathOperator{\oneh}{\widehat{h}}
\DeclareMathOperator{\riemhess}{Hess^{X}}
\DeclareMathOperator{\hessSt}{Hess^{\ST}}
\DeclareMathOperator{\hessGr}{Hess^{\Gr}}
\DeclareMathOperator{\riemgrad}{grad^{X}}
\DeclareMathOperator{\gradSt}{grad^{\ST}}
\DeclareMathOperator{\gradGr}{grad^{\Gr}}
\renewcommand{\epsilon}{\varepsilon}
\newcommand*{\eucgrad}[1]{\ensuremath{\mathbf{G}_{#1}}}
\newcommand*{\euchess}[1]{\ensuremath{\mathbf{H}_{#1}}}
\newcommand*{\jac}[1]{\ensuremath{\mathbf{J}_{#1}}}
\newcommand*{\id}{\mathrm{Id}}
\newcommand*{\trnp}[1]{\ensuremath{{#1}^\top}}
\newcommand*{\sym}[1]{\mathrm{sym}\!\del{#1}}
\newcommand*{\asym}[1]{\mathrm{skew}\!\del{#1}}
\newcommand{\ol}{\overline}
\newcommand{\Cmin}{\ensuremath{C_{\mathrm{min}}}}
\newcommand{\perm}[1]{\mathrm{perm}\!\del{#1}}
\newcommand{\vect}[1]{\mathrm{vec}\!\del{#1}}
\newcommand{\unvec}[3]{\mathrm{unvec}\!\del{#1, \, #2 \times #3}}
\newcommand{\mat}[2]{\mathrm{Mat}(#1 \times #2)}
\newcommand{\riemproj}{\mathrm{proj}}
\newcommand{\bn}{\mathbb{N}}
\newcommand{\br}{\mathbb{R}}
\newcommand{\ro}{\mathrm{O}}
\newcommand{\inner}[2]{\left\langle #1, \, #2 \right\rangle}
\newcommand{\x}{\times}
\newcommand{\mc}{\mathcal}
\newcommand{\mf}{\mathfrak}
\newcommand{\ceq}{\coloneqq}
\newcommand{\eps}{\varepsilon}
\newcommand{\covd}[1]{\frac{\mathrm{D}}{\dd{t}}{#1}}
\newcommand{\gr}[2]{\mathrm{Gr}(#1, #2)}
\newcommand{\st}[2]{\mathrm{St}(#1, #2)}
\def\input@path{{sections/}}
\def\keywordname{{\bfseries Keywords}}
\def\keywords#1{\par\addvspace\medskipamount{\rightskip=0pt plus1cm
\def\and{\ifhmode\unskip\nobreak\fi\ $\cdot$
}\noindent\keywordname\enspace\ignorespaces#1\par}}
\def\ackname{Acknowledgements}
\def\acknowledgement{\par\addvspace{17pt}\small\rmfamily
\trivlist\if!\ackname!\item[]\else
\item[\hskip\labelsep
{\bfseries\ackname}]\fi}
\newenvironment{acknowledgements}{\begin{acknowledgement}}
{\end{acknowledgement}}
\numberwithin{equation}{section}
\newtheorem{theorem}{Theorem}[section]
\newtheorem{prop}[theorem]{Proposition}
\newtheorem{defn}[theorem]{Definition}
\newtheorem{obs}[theorem]{Observation}
\newtheorem{algo}[theorem]{Algorithm}
\begin{document}

\title{\TheTitle}

\author{
    Caio O. da Silva \and
    Yuri A. Aoto \and
    Felipe F. G. S. Costa \and
    M{\'a}rcio F. da Silva
}

\date{}

\maketitle

\begin{abstract}
  This work presents a thorough numerical study of Riemannian Newton's Method (RNM) for optimization problems, with a focus on the Grassmannian and on the Stiefel manifold.
  We compare the Riemannian formulation of Newton's Method with its classical Euclidean counterpart based on Lagrange multipliers by applying both approaches to the important and challenging Hartree--Fock energy minimization problem from Quantum Chemistry.
  Experiments on a dataset of 125 molecules show that the Riemannian approaches achieve higher convergence rates, require fewer iterations, and exhibit greater robustness to the choice of initial guess.
  In this work we also analyze the numerical issues that arise from using Newton's Method on the total manifold when the cost function is defined on the quotient manifold.
  We investigate the performance of a modified RNM in which we ignore the small eigenvalues of the Hessian and the results indicate that this modified method is stable and performs on par with the RNM on the quotient manifold.
  
  \keywords{Newton's Method \and Lagrange multipliers \and Grassmannian \and Stiefel manifold \and Hartree--Fock \and Riemannian Optimization}
\end{abstract}

\section{Introduction} \label{sec:intro}

The field of Riemannian Optimization\footnote{Also known as \emph{Optimization on manifolds}.} is primarily interested in studying problems of the form:
\begin{equation}
    \text{minimize} \; f:X \to \br,
\end{equation}
being $(X, \inner{\cdot}{\cdot}_X)$ a Riemannian manifold and $f$ the cost function to be minimized.
By employing Riemannian counterparts of the gradient and Hessian of $f$, one obtains optimization algorithms that respect the underlying geometry of the manifold (\cite{boumal2023}).
This is relevant since many optimization problems can be formulated as:
\begin{equation} \label{eq:problem_eucl}
    \text{minimize} \; f:\br^n \to \br \;
    \text{subject to} \; c(x) = 0,
\end{equation}
being $c:\br^n \to \br^k$ a function whose inverse image $c^{-1}(0)$ is a submanifold of $\br^n$.
For this class of problems, Riemannian Optimization can be applied by considering the restriction of $f$ to $c^{-1}(0)$ directly.

In contrast, the traditional approach to tackle constrained optimization, known as the method of Lagrange multipliers, defines a Lagrangian function
\begin{align}
    \begin{split}
        \mathcal{L}:\br^n \times \br^k & \to \br \\
        (x, \epsilon) & \mapsto f(x) - \epsilon^{\top}c(x),
    \end{split}
\end{align}
and exploits the fact that solutions to~\eqref{eq:problem_eucl} correspond to saddle points of $\mc{L}$.
This raises a natural question:
which method performs better in practice, Riemannian Optimization or Lagrange multipliers?

In this work we perform a detailed comparison between the Riemannian version of Newton's Method and the classical Newton's Method combined with Lagrange multipliers.
This comparison is relevant because, although it is known that Riemannian methods can outperform Euclidean approaches that ignore the geometry of the constraints (\cite{boumal2011,vandereycken2013,xinru2020}), existing comparisons are usually limited to simpler optimization problems than the one considered in this work.
Moreover, to the best of our knowledge such a comparison has never been carried for Newton's Method.
That said, one of the goals of this paper is to bridge this gap by showing that the Riemannian Newton's Method outperforms its Lagrangian counterpart on a very challenging and relevant problem from Quantum Chemistry known as Hartree--Fock.

Another problem we address in this work is the following:
let $(\ol{X}, \inner{\cdot}{\cdot}_{\ol{X}})$ be a Riemannian manifold and $\sim$ an equivalence relation in $\ol{X}$.
Under suitable conditions (see \cite{boumal2023}), the quotient $X \coloneqq \ol{X}/\!\sim$ is also a Riemannian manifold, known as the quotient manifold.
Its Riemannian metric, $\inner{\cdot}{\cdot}_X$, is induced from $\inner{\cdot}{\cdot}_{\ol{X}}$ by lifting tangent vectors from $T_{[p]}X$ to $T_p\ol{X}$ via the inverse of the canonical projection $\pi:\ol{X} \to X$.\footnote{$[p] \in X$ denotes the equivalence class of $p \in \ol{X}$.}
Now, given a cost function $f:X \to \br$, one can always consider its pullback $\ol{f}:\ol{X} \to \br$ defined by $\ol{f} = f \circ \pi$.
This raises another question:
what are the differences between optimizing $f$ directly on the quotient manifold, $X$, or instead $\ol{f}$ on the total space $\ol{X}$?

It has been established that optimization algorithms that rely solely on the gradient of $f$, such as gradient descent, are equivalent for both formulations, see \cite{boumal2023}.
However, Newton's Method on the total manifold $\ol{X}$ is not equivalent to its counterpart on the quotient $X$ (also discussed in \cite{boumal2023}), and a key reason is that the Hessian of $\ol{f}$ may be singular, potentially leading to numerical instabilities during optimization.
Although this is known from a theoretical perspective, concrete examples analyzing this issue in practice are missing.
We aim to fill this gap.
Towards that end, we adapt to the Riemannian setting a classical spectrum truncation technique for Hessians and show that it enables stable optimization of $\ol{f}$ instead of $f$.
This is relevant because, in practice, it is often easier to formulate the problem and to derive the necessary formulas for functions defined on the total space $\ol{X}$ than on the quotient manifold $X$.

The third contribution of this paper is to provide implementations of the Riemannian Hessian.
Although formulas for the Hessian can be found in textbooks \cite{boumal2023, absil2008}, and a detailed computation of the Hessian on the Stiefel manifold can be found in \cite{sato2017}, it is still not straightforward to translate their formulas into a working implementation of Newton's Method.
In this work we provide two implementations for the Hessian on the Grassmannian and the Stiefel manifold (because the Hartree--Fock problem is defined on them), but the reasoning behind these implementations can be applied to other manifolds and cost functions.

The fourth and final contribution of this work is a numerical investigation of the convergence neighborhoods of the methods under consideration.
To decouple algorithmic performance from the influence of the initial guess, we estimate the convergence neighborhood of each method numerically and compare the results.
Our findings show that, on average, Riemannian methods possess larger convergence neighborhoods and are less sensitive to the choice of initial guess, suggesting they are more robust than Newton's Method combined with Lagrange multipliers.

The remainder of the paper is structured as follows:
Section~\ref{sec:rnr} introduces the Riemannian version of Newton's Method, beginning with a brief review of essential concepts of Riemannian Geometry.
Subsections~\ref{subsec:newton_st},~\ref{subsec:newton_gr}, and~\ref{subsec:newton_lm} present the explicit formulas necessary to implement the method on the Stiefel manifold, the Grassmannian, and using Lagrange multipliers, respectively.
In Section~\ref{sec:hessian}, we discuss in details the implementation of the Riemannian Hessian and how it enables optimization in the total manifold.
Section~\ref{sec:remarks} contains some important remarks regarding the similarities and differences between the methods implemented in this paper.
Section~\ref{sec:hartree_fock} provides an overview of the Hartree--Fock problem.
Finally, Section~\ref{sec:results} presents several empirical results comparing the performance of the three methods in practice

\section{Riemannian Newton's Method} \label{sec:rnr}

In this section we will define what the Riemannian Newton's Method (RNM) is.
We will assume the reader is familiar with Differential Geometry, but, since the Riemannian Optimization literature diverges a little from the traditional Differential Geometry literature in the definition of some objects, we will provide such definitions and some reminders in what follows.
For more details, see \cite{lee2018, boumal2023}.

From now on, $(X, \inner{\cdot}{\cdot}_X)$ is assumed to be a Riemannian manifold, with $\inner{\cdot}{\cdot}_X$ being the Riemannian metric, $T_xX$ is the tangent space of $X$ at $x \in X$, and
\begin{equation}
  TX \ceq \bigsqcup_{x \in X} \cbr{x} \x T_xX
\end{equation}
is the tangent bundle of $X$.
For convenience, we will often omit $X$ from the metric $\inner{\cdot}{\cdot}_X$ and we will refer only to $X$ as the Riemannian manifold.

\begin{defn}
  Let $f:X \to \br$ be a smooth function defined on a Riemannian manifold $X$.
  The \emph{Riemannian gradient} of $f$ is the vector field $\riemgrad{f}:X \to TX$ uniquely defined by the following identity:
  \begin{equation}
    Df(x)(v) = \inner{v}{\riemgrad{f}(x)}
  \end{equation}
  for all $(x, v) \in TX$.
  $Df(x)(v)$ denotes the \emph{directional derivative} of $f$ at $x$ in the direction $v$.
\end{defn}

Recall that a Riemannian metric $\inner{\cdot}{\cdot}_X$ always induces a connection in $X$ known as \emph{Levi-Civita connection}.
We will denote such connection by $\nabla^X$.
A connection allows us to differentiate vector fields and if we apply this to the Riemannian gradient, we obtain the concept of a Riemannian Hessian.

\begin{defn}
  Let $f:X \to \br$ be a smooth function defined on a Riemannian manifold $(X, \inner{\cdot}{\cdot}_X)$ and let $\nabla^X$ be the associated Levi-Civita connection.
  The \emph{Riemannian Hessian} of $f$ at $x \in X$ is the operator $\riemhess{f}(x):T_xX \to T_xX$ defined as:
  \begin{equation}
    \riemhess{f}(x)(v) = \nabla_v^X \riemgrad{f}.
  \end{equation}
\end{defn}

It is worth noting that the Riemannian gradient and Hessian coincide with the usual Euclidean gradient and Hessian one learns in Calculus when $X = \br^n$.
However, this is not true if $X \neq \br^n$ because the geometry of $X$ should be taken into account.
Often $X$ is a set of matrices satisfying some constraints and in these cases the Riemannian gradient and Hessian are computed by projecting the Euclidean gradient and Hessian onto the tangent space of $X$.
For more details, see \cite{absil2008, boumal2023}.

\begin{obs}
From now on $I \subset \br$ will be used to denote an interval.
The interval can be open or closed depending on the context.
Notice that in both cases it can be seen as a manifold, but, if it is a closed interval, then it is a manifold with boundary.
In both cases the concepts below are well-defined.
\end{obs}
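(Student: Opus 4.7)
Strictly speaking, the final item is an observation rather than a theorem, so my task is not to produce a deep argument but to justify the three assertions it contains: that an interval $I \subset \br$ is a smooth manifold in both the open and closed cases, that in the closed case it is a manifold with boundary, and that the Riemannian concepts just introduced (gradient, Levi--Civita connection, Hessian) carry over unambiguously to both settings. The plan is to handle the two cases separately and then verify, in each, that the definitions given in the previous paragraphs still make sense.

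First I would treat the open case. Any open interval $I \subset \br$ is an open submanifold of $\br$, so it inherits the ambient smooth structure and, via the canonical identification $T_xI \cong \br$, the restriction of the standard Euclidean inner product to each tangent space gives a Riemannian metric $\inner{\cdot}{\cdot}_I$. The definitions of $\riemgrad{f}$, $\nabla^I$, and $\riemhess{f}$ then apply verbatim; in fact all three reduce to the usual Calculus objects on $\br$.

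For the closed case $I = [a,b]$, I would invoke the standard construction of a smooth manifold with boundary: the atlas consists of the inclusion chart on the interior $(a,b)$ together with charts near each endpoint taking values in $[0,\infty)$. The tangent space $T_xI$ remains a one-dimensional real vector space at every point, including boundary points (where it is identified with all of $\br$, not just the inward half-line), so the same restriction of the Euclidean inner product yields a Riemannian metric. Because the manifold is one-dimensional, the Levi--Civita connection is trivial and extends unambiguously to the endpoints.

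The only point deserving mild care -- and therefore the closest thing to an obstacle -- is that the directional derivative $Df(x)(v)$ in the definition of $\riemgrad{f}$ needs to make sense at boundary points. I would resolve this in either of two equivalent ways: via one-sided limits of difference quotients (which suffice because $f$ is smooth up to the boundary), or by extending $f$ smoothly to an open neighborhood of $I$ in $\br$ (which is always possible, e.g.\ by Whitney's extension theorem, though in this one-dimensional setting it is elementary). Either route guarantees that $\riemgrad{f}$ and $\riemhess{f}$ are well-defined at every $x \in I$, confirming the final sentence of the observation and justifying the convention fixed from this point onward in the paper.
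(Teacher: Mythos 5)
The paper offers no proof of this observation --- it is stated as a remark fixing notation --- so there is nothing to compare line by line; your core point, that a closed interval is a smooth manifold with boundary (interior chart plus endpoint charts into $[0,\infty)$, with $T_xI \cong \br$ even at the endpoints, and smoothness at an endpoint meaning one-sided derivatives or, equivalently, smooth extendability to an open neighborhood), is the standard and correct justification.

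There is, however, a mismatch in \emph{what} you verify. You spend the argument showing that $\riemgrad{f}$, $\nabla^I$, and $\riemhess{f}$ make sense for a cost function $f:I\to\br$, but the paper never optimizes over $I$ and never equips $I$ with a Riemannian metric. In this paper $I$ only ever appears as the \emph{parameter domain} of a curve $\gamma:I\to X$, and the ``concepts below'' that the observation promises are well-defined are: smooth vector fields along $\gamma$, the covariant derivative $\frac{\mathrm{D}}{\dd{t}}$ on $\mf{X}(\gamma)$, geodesics, the maximal-geodesic proposition, and the exponential map. So the thing actually needing a sentence of justification is that these objects survive when $I$ is closed: a map $\gamma:[a,b]\to X$ is smooth iff it extends smoothly past the endpoints (equivalently, all one-sided derivatives exist in charts), $\gamma'$ is then a well-defined element of $\mf{X}(\gamma)$ including at $t=a,b$, and the three axioms characterizing $\frac{\mathrm{D}}{\dd{t}}$ determine it uniquely on $\mf{X}(\gamma)$ by the same local-frame computation as in the open case, since that computation only uses one-sided differentiability of the component functions $g\in C^\infty(I)$. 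Your closing paragraph on one-sided difference quotients versus Whitney extension is exactly the right tool --- it is just aimed at the gradient and Hessian of $f$ rather than at $\gamma'$ and $\frac{\mathrm{D}}{\dd{t}}$, which is where the paper actually needs it.
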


So far we have two important concepts in optimization:
the gradient of a function $f$, which is a vector pointing in the direction that maximizes $f$, and the Hessian of $f$, which allows us to determine whether a critical point of $f$ is a saddle point, a maximum or a minimum.
However, we still need to know how we can move from one point to another because the Newton's Method is iterative.
To do this, we need geodesics,\footnote{Or, more generally, retractions (see \cite{boumal2023,absil2008}), but we will focus on geodesics in this work.}
which can be seen as a generalization of straight lines on manifolds.
Thus, we need to describe a straight line in terms of derivatives because that is what Differential Geometry is about.
Based on the fact that straight lines in the Euclidean space are curves whose second derivative is zero, we will see how to translate this to the Riemannian setting.

\begin{defn}
  Given a Riemannian manifold $X$ and a smooth curve $\gamma:I \to X$, we say that a function $Z:I \to TX$ is a \emph{smooth vector field along $\gamma$} if $Z$ is a smooth function and $Z(t) \in T_{\gamma(t)}X$ for all $t \in I$.
  We will denote the set of all smooth vector fields along $\gamma$ by $\mf{X}(\gamma)$.
\end{defn}

Notice that $\gamma'$ is a smooth vector field along $\gamma$.
However, the different concepts of derivatives we have so far are still not enough to differentiate $\gamma'$.
To do that, we need covariant derivatives.

\begin{defn}
  Let $X$ be a Riemannian manifold with Levi-Civita connection $\nabla^X$ and $\gamma:I \to X$ a smooth curve.
  The \emph{covariant derivative} induced by $\gamma$ is the unique operator $\covd:\mf{X}(\gamma) \to \mf{X}(\gamma)$ that satisfies the following properties for all $Y, Z \in \mf{X}(\gamma)$, $U \in \mf{X}(X)$, $a, b \in \br$ and $g \in C^{\infty}(I)$:
  \begin{enumerate}
    \item $\covd{(aY + bZ)} = a\covd{Y} + b\covd{Z}$;
    \item $\covd{(gZ)} = g'Z + g\covd{Z}$;
    \item $\del{\covd{(U \circ \gamma)}}(t) = \nabla_{\gamma'(t)}^XU$.
  \end{enumerate}
\end{defn}

Now we can finally define geodesics.
\begin{defn}
  A smooth curve on a Riemannian manifold $\gamma:I \to X$ is called a \emph{geodesic} if, and only if,
  \begin{equation}
    \gamma''(t) \ceq \frac{\mathrm{D}}{\dd{t}}\gamma'(t) = 0
  \end{equation}
  for all $t \in I$.
\end{defn}

The next proposition assures us that geodesics always exist (locally) no matter in which direction we want to go.
\begin{prop}
  Given a Riemannian manifold $X$, $x \in X$ and $v \in T_xX$, there exists a unique maximal geodesic $\gamma_v:I \to X$ such that $0 \in I$, $\gamma_v(0) = x$ and $\gamma_v'(0) = v$.
  Maximal in this context means that if there exists an interval $J$ with the same properties, then $J \subset I$.
\end{prop}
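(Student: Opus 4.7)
The plan is to pass to a local chart, recognize the geodesic equation as a smooth second-order ODE, invoke the Picard--Lindel\"of theorem for local existence and uniqueness, and then glue local solutions into a maximal one via a standard connectedness argument.

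First, I would fix a smooth chart $(U, \varphi)$ around $x$ and let $\Gamma^k_{ij}$ denote the Christoffel symbols of $\nabla^X$ in this chart. Writing $\gamma^k(t) \ceq \varphi^k(\gamma(t))$ and unfolding the covariant derivative using its defining properties together with $\nabla^X_{\partial_i}\partial_j = \Gamma^k_{ij}\partial_k$, the equation $\covd{\gamma'(t)} = 0$ becomes the second-order system
\begin{equation*}
\ddot{\gamma}^k(t) + \sum_{i,j}\Gamma^k_{ij}(\gamma(t))\,\dot{\gamma}^i(t)\,\dot{\gamma}^j(t) = 0, \qquad k = 1, \ldots, n,
\end{equation*}
with initial conditions $\gamma^k(0) = \varphi^k(x)$ and $\dot{\gamma}^k(0) = v^k$, where the $v^k$ are the components of $v$ in the coordinate basis induced by $\varphi$. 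Rewriting this as a first-order system on an open subset of $\br^{2n}$ with smooth right-hand side (the $\Gamma^k_{ij}$ are smooth), the Picard--Lindel\"of theorem yields an open interval $I_0 \ni 0$ together with a unique smooth solution on it, which transfers back to a geodesic $\gamma_0:I_0 \to X$ realizing the prescribed initial data.

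Next, I would upgrade this to a global uniqueness statement: any two geodesics $\gamma_1:J_1 \to X$ and $\gamma_2:J_2 \to X$ satisfying $\gamma_i(0) = x$ and $\gamma_i'(0) = v$ must agree on $J_1 \cap J_2$. Consider the set $A \ceq \{t \in J_1 \cap J_2 : \gamma_1(t) = \gamma_2(t) \text{ and } \gamma_1'(t) = \gamma_2'(t)\}$. It is non-empty (it contains $0$), closed by continuity, and open because at any $t_0 \in A$ the local existence-uniqueness theorem can be reapplied in a common chart around $\gamma_1(t_0)$ with shifted initial data. Connectedness of the interval $J_1 \cap J_2$ then forces $A = J_1 \cap J_2$.

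Finally, I would construct the maximal geodesic by setting $I$ to be the union of the domains of all geodesics satisfying the stated initial conditions, and defining $\gamma_v$ to coincide on each such domain with the corresponding geodesic. The previous step guarantees this is well-defined and smooth, and maximality holds by construction, since by definition $I$ contains every other admissible domain $J$. The main delicate point is the openness of $A$ in the patching step, which is where one must carefully invoke Picard--Lindel\"of with shifted initial data in overlapping charts; once this is in place, both uniqueness on intersections and maximality of $\gamma_v$ follow with essentially no extra work.
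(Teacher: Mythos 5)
Your argument is correct and is essentially the standard proof the paper defers to by citing \cite[Corollary 4.28]{lee2018}: reduce to the coordinate geodesic ODE, apply Picard--Lindel\"of for local existence and uniqueness, establish agreement of any two solutions on overlapping domains by an open--closed connectedness argument, and take the union of all admissible domains to obtain the maximal geodesic. The one step you rightly flag as delicate, openness of the agreement set $A$, is handled exactly as you describe, so nothing is missing.
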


\begin{proof}
  See \cite[Corollary 4.28]{lee2018}.
\end{proof}

To conclude, we need to define the Riemannian exponential, which allows us to move from one point to another on a manifold.
\begin{defn}
  Let $X$ be a Riemannian manifold and
  \begin{equation}
    \mc{U} \ceq \cbr{(x, v) \in TX : \text{$\gamma_v$ is defined on an interval containing $[0, 1]$}}.
  \end{equation}
  The \emph{Riemannian exponential} is the function $\exp^X:\mc{U} \to X$ defined by:
  \begin{equation}
    \exp^X(x, v) \ceq \gamma_v(1).
  \end{equation}
  We will usually denote $\exp^X(x, v)$ as $\exp_x^X(v)$.
\end{defn}

Putting all these concepts together, we can define the main algorithm we are interested in the present work.
\begin{algo}[RNM] \label{algo:riem_newton}
  Let $X$ be a Riemannian manifold and $f:X \to \br$ a cost function.
  Given a starting point $x_0 \in X$, proceed as follows:
  \begin{enumerate}

  \item Compute $\riemgrad{f}(x_k)$ and $\riemhess{f}(x_k)$;

  \item Solve
    $\riemhess{f}(x_k)(v_k) = -\riemgrad{f}(x_k)$
    for $v_k \in T_{x_k}X$;

  \item Update the point according to
    $x_{k+1} \ceq \exp_{x_k}^X(v_k)$.

  \end{enumerate}
  At the end of this procedure, one obtains a sequence $\cbr{x_k \in X : k \in \bn}$ that, under certain hypothesis (see \cite{smith2014}), converges to a critical point of $f$.
\end{algo}


\subsection{Stiefel manifold} \label{subsec:newton_st}

Given a symmetric positive-definite matrix $S \in \mat{d}{d}$ and $N \in \mathbb{N}$ satisfying $0 \le N \le d$, the Stiefel manifold is defined as follows:
\begin{equation}
  \st{d}{N} \ceq \cbr{C \in \mat{d}{N} : C^{\top}SC = \id_N}.
\end{equation}
It is worth pointing out that $S$ is usually assumed to be $\id_d$, but, due to the application described in Section~\ref{sec:hartree_fock}, we will consider this generalized version.
To implement the RNM in the Stiefel manifold we need to know the explicit formulas for some objects.
Starting with the tangent space to $\st{d}{N}$ at $C \in \st{d}{N}$, we have:
\begin{equation} \label{eq:tg_sp_st}
  T_C\st{d}{N} = \cbr{\mu \in \mat{d}{N} : C^{\top}S\mu + \mu^{\top}SC = 0_N}.
\end{equation}
The Riemannian metric we should consider in this generalized Stiefel manifold is:
\begin{equation} \label{eq:metric_st}
  \inner{\mu}{\mu'} = \tr(\mu^{\top}S\mu'),
\end{equation}
being $\mu', \mu \in \mat{d}{N}$.
Now, defining
\begin{equation}
  \sym{M} \ceq \frac{M + M^{\top}}{2}, \quad \asym{M} \ceq \frac{M - M^{\top}}{2},
\end{equation}
we have that the projection to the tangent space to the Stiefel manifold $\riemproj_C^{\ST}:\mat{d}{N} \to T_C\st{d}{N}$ is given by:
\begin{equation} \label{eq:proj_st}
  \riemproj_C^{\ST}(M) = M - C\sym{C^{\top}SM} = (\id_d - CC^{\top}S)M + C\asym{C^{\top}SM}.
\end{equation}
The Riemannian gradient of a smooth cost function $f:\st{d}{N} \to \br$ is given by:
\begin{equation} \label{eq:grad_st}
  \gradSt{f}(C) = \riemproj_C^{\ST}\del{\eucgrad{\ol{f}}(C)},
\end{equation}
being $\ol{f}$ an extension of $f$ to $\mat{d}{N}$ and $\eucgrad{\ol{f}}(C)$ the usual Euclidean gradient of $\ol{f}$.
The (Riemannian) Hessian of $f$ is given by:
\begin{equation} \label{eq:hessian_st}
  \hessSt{f}(C)(\mu)
  = \riemproj_C^{\ST}\del{S^{-1}\euchess{\ol{f}}(C)(\mu)
  - \mu\,\sym{\trnp{C}\eucgrad{\ol{f}}(C)}},
\end{equation}
being $\euchess{\ol{f}}(C)$ the Euclidean Hessian of $\ol{f}$.
We would like to make some comments regarding this formula for the Hessian.
First, since $\euchess{\ol{f}}(C) \in \mat{dN}{dN}$, $S^{-1} \in \mat{d}{d}$ and $\mu \in \mat{d}{N}$, we cannot interpret $S^{-1}\euchess{\ol{f}}(C)(\mu)$ as a regular matrix multiplication because there is a mismatch between the dimensions.
To fix this we need to (1) vectorize $\mu$;
(2) calculate $\euchess{\ol{f}}(C)(\mu)$ as a regular matrix-vector multiplication;
(3) unvectorize the resulting vector, obtaining a matrix in $\mat{d}{N}$;
(4) left-multiply by $S^{-1}$.
Another problem is that to solve Newton's Equation~\eqref{eq:newton} using a package such as \verb+NumPy+, we need to move $\mu$ to the right of the Hessian and this is quite tricky to do.
The reader can take a look at Section~\ref{sec:hessian} for more details on how to fix both of these problems.

To conclude, the Riemannian exponential is:
\begin{equation} \label{eq:exp_st}
  \exp_C^{\ST}(\mu)
  =
  \begin{bmatrix}
    C & & \mu
  \end{bmatrix}
  \exp\del{
    \begin{bmatrix}
      C^{\top}S\mu & & -\mu^{\top}S\mu \\
      \id_N & & C^{\top}S\mu
    \end{bmatrix}
  }
  \begin{bmatrix}
    \id_N \\ 0_N
  \end{bmatrix}
  \exp(-C^{\top}S\mu).
\end{equation}
For more details, see \cite{boumal2023, dasilva2024}.


\subsection{Grassmannian} \label{subsec:newton_gr}

Let
\begin{equation}
  \ro(N) \ceq \cbr{M \in \mat{N}{N} : M^{\top}M = \id_N}
\end{equation}
and $\sim$ be the following equivalence relation on $\st{d}{N}$: given $C, C' \in \st{d}{N}$, we say that $C \sim C'$ if, and only if, there exists $M \in \ro(N)$ such that $C' = CM$.
The Grassmannian is defined as:
\begin{equation}
  \gr{N}{d} \ceq \st{d}{N}/\!\!\sim\; = \st{d}{N}/\ro(N).
\end{equation}

In order to implement the RNM in the Grassmannian, we once again need to know the explicit formulas for some objects.
Starting with the tangent space to $\gr{N}{d}$ at a point $C \in \gr{N}{d}$, we have:\footnote{We will denote a point in the Grassmannian by $C$ instead of the common notation $[C]$.}
\begin{equation}\label{eq:tg_sp_gr}
  T_C\gr{N}{d} = \cbr{\eta \in \mat{d}{N} : C^{\top}S\eta = 0},
\end{equation}
being $S \in \mat{d}{d}$ the same symmetric positive-definite matrix used in the definition of the Stiefel manifold.
The Riemannian metric is the same as in the Stiefel manifold.
Now, the projection to the tangent space to the Grassmannian $\riemproj_C^{\Gr}:\mat{d}{N} \to T_C\gr{N}{d}$ is given by:
\begin{equation}\label{eq:proj_gr}
  \riemproj_C^{\Gr}(M) = (\id_d - CC^{\top}S)M.
\end{equation}
Notice that $T_C\gr{N}{d}$ is a subspace of $T_C\st{d}{N}$ and the projections onto the Stiefel manifold and onto the Grassmannian are related by:
\begin{equation}
  \riemproj_C^{\ST}(M) = \riemproj_C^{\Gr}(M) + C\asym{C^{\top}SM}.
\end{equation}

Before we move on to the gradient and the Hessian, we need to define a lift.
Given a function $f:\gr{N}{d} \to \br$, the lift of $f$ is the function $\ol{f}:\st{d}{N} \to \br$ defined by $\ol{f} = f \circ \pi$, being $\pi:\st{d}{N} \to \gr{N}{d}$ the canonical projection.
Having defined that, the Riemannian gradient of a smooth cost function $f:\gr{N}{d} \to \br$ is given by:
\begin{equation}\label{eq:grad_gr}
  \gradGr{f}(C) = \riemproj_C^{\Gr}\del{\eucgrad{\ol{f}}(C)},
\end{equation}
being $\ol{f}$ an extension to $\mat{d}{N}$ of a lift of $f$ to $\st{d}{N}$, and $\eucgrad{\ol{f}}(C)$ the usual Euclidean gradient of $\ol{f}$.\footnote{The lift of the extension should be denoted by $\ol{\ol{f}}$, but to avoid cluttered notation we will write $\ol{f}$.}
The (Riemannian) Hessian of $f$ is given by:
\begin{equation}\label{eq:hessian_gr}
  \hessGr{f}(C)(\eta)
  = \riemproj_C^{\Gr}\del{\euchess{\ol{f}}(C)(\eta)} - \eta\,C^{\top}\eucgrad{\ol{f}}(C),
\end{equation}
being $\euchess{\ol{f}}(C)$ the Euclidean Hessian of $\ol{f}$.
Observe that, if we want to implement this equation, we will face problems similar to those already described in the previous subsection.
The details on how to implement the Hessian are in Section~\ref{sec:hessian}.

To conclude, we need the Riemannian exponential.
Let $O \in \mat{d}{d}$ be an invertible matrix such that $O^{\top}SO = \id_d$.
Then, given $\eta \in T_C\gr{N}{d}$ and letting $O^{-1}\eta = UDV$ be a \emph{thin Singular Value Decomposition},\footnote{Assuming $N \leq d$ (which is always the case for us), we have $U \in \mat{d}{N}$ and $D, V \in \mat{N}{N}$ such that $D$ is diagonal and $U^{\top}U = V^{\top}V = VV^{\top} = \id_N$. The matrix $U$ is also known as semi-orthogonal because it is false that $UU^{\top} = \id_d$ if $N < d$.} we have:
\begin{equation}\label{eq:geodesic_Gr}
  \exp_C^{\Gr}(\eta) = \qty(C \trnp{V} \cos(D) + O U \sin(D)) V.
\end{equation}
For more details, see \cite{boumal2023, dasilva2024}.


\subsection{Newton's Method with Lagrange multipliers} \label{subsec:newton_lm}

The traditional approach to tackle constrained optimization problems is using Lagrange multipliers combined with the Euclidean Newton's Method.
However, this is just a particular case of the Riemannian version.
To see that, suppose we have the following optimization problem:
\begin{equation}
  \text{minimize} \; \ol{f}:\br^n \to \br \;
  \text{subject to} \; c(x) = 0,
\end{equation}
being $0 \in \br^k$ a regular value of the function $c:\br^n \to \br^k$ that encodes the constraints.
According to the Lagrange Multipliers Theorem (\cite[Theorem 12.1]{nocedal2006}), we can equivalently look for saddle points of the following Lagrangian:
\begin{equation}
  \begin{split}
    \mathcal{L}:\br^n \x \br^k & \to \br \\
    (x, \eps) & \mapsto \ol{f}(x) - \eps^{\top}c(x).
  \end{split}
\end{equation}
So, considering the manifold $X = \br^n \x \br^k$ and the cost function $\mc{L}$, we can see that, if we use Newton's Method to find the saddle points of $\mc{L}$, then the Lagrange multipliers approach is just a particular case of the Riemannian Newton's Method.\footnote{It is possible to use other algorithms to look for saddle points, but the most common is Newton's Method.}
To distinguish from the Riemannian version, though, we will call this approach Newton's Method with Lagrange multipliers (NMLM).

Moving to our particular case, the manifold we are interested in is $X = \mat{d}{N} \x \mat{N}{N}$ and the cost function is the following Lagrangian:
\begin{equation}
  \mc{L}(C, \, \eps) = \ol{f}(C) - \tr(\trnp{\eps}\qty(\trnp{C}SC - \id_N)).
\end{equation}
Notice that the matrix $\trnp{C}SC - \id_N$ is symmetric and, consequently, $\eps$ should also be.
This is useful to know because we need less parameters to optimize, 
but obtaining Equations~\eqref{eq:lagr_hc} and \eqref{eq:lagr_mixed} below
is more complicated if we ignore the entries above or below the diagonal of $\eps$.
It would also be possible to deduce the formulas assuming explicitly that $\eps$ is symmetric, but then the constraints itself would form a manifold.
We could use even more Lagrange multipliers to impose the symmetry of $\eps$, but we observed experimentally that, if we start with $\eps$ symmetric,
it remains symmetric in all iterations.
We could not prove this fact, though, and we opted not to impose any constraint on $\eps$ because the formulas are still easy to write down and because in practice $\eps$ remains symmetric anyway.

As already mentioned, in this case the Riemannian and Euclidean gradient and Hessian coincide, and they are given by:
\begin{equation} \label{eq:Lagr_grad}
  \eucgrad{\mc{L}}(C, \, \eps) =
  \begin{bmatrix}
    \eucgrad{\ol{f}}(C) - 2SC\eps \\
    -(\trnp{C}SC - \id_N)
  \end{bmatrix}
\end{equation}
and
\begin{equation} \label{eq:Lagr_hess}
  \euchess{\mc{L}}(C, \, \eps) =
  \begin{bmatrix}
    \euchess{\ol{f}}(C) - \euchess{c}(C) & -\jac{\trnp{C}SC}(C) \\
    -\trnp{(\jac{\trnp{C}SC}(C))} & 0_{N^2} \\
  \end{bmatrix},
\end{equation}
being $\euchess{c}(C)$ the Hessian of the constraint $\tr(\trnp{\eps}\qty(\trnp{C}SC - \id_N))$ and $\jac{\trnp{C}SC}(C)$ the Jacobian of the function $C \mapsto \trnp{C}SC$.
Note that in the block $\euchess{\ol{f}}(C) - \euchess{c}(C)$ we are considering only the second-order partial derivatives of $\mc{L}$ with respect to $C$ while in the block $-\jac{\trnp{C}SC}$ we have the mixed second-order partial derivatives with respect to $C$ and $\eps$.
The block made of zeros is the second-order partial derivatives of $\mc{L}$ with respect to $\eps$.
Observe also that in Equation \eqref{eq:Lagr_grad} we have a matrix instead of a vector, which means that to solve Newton's Equation~\eqref{eq:newton} one has to vectorize the gradient before plugging it into a solver such as \verb|numpy.linalg.solve|.
Let us now see how to construct the Hessian.

Ignoring $\euchess{\ol{f}}(C)$ because it depends on the specific problem one is trying to solve, we have:
\begin{equation} \label{eq:lagr_hc}
  \euchess{c}(C) = (\eps + \trnp{\eps}) \otimes S,
\end{equation}
being $\otimes$ the Kronecker product, and:
\begin{equation} \label{eq:lagr_mixed}
  \jac{\trnp{C}SC}
  = \id_N \otimes \trnp{C}S
  + \begin{bmatrix}
    \trnp{C}S \otimes e_1
    & \dots &
    \trnp{C}S \otimes e_N
    \end{bmatrix}.
\end{equation}
Moreover, the exponential in this case is quite easy because it is just a sum of vectors:
\begin{equation}
  \exp_{(C, \, \eps)}^{\Euc}(v, w) = (C + v, \; \eps + w).
\end{equation}
For more details, see \cite{dasilva2024}.

To conclude, observe that, if $(C^*, \, \eps^*)$ is a saddle point of $\mc{L}$,
then $\eps^*$ can be obtained in terms of $C^*$.
Indeed, since $\eucgrad{\mc{L}}(C^*, \, \eps^*) = 0$, we know from Equation~\eqref{eq:Lagr_grad} that $\eucgrad{\ol{f}}(C^*) - 2SC^*\eps^* = 0$ and, consequently, $2(C^*)^\top SC^*\eps^* = (C^*)^\top \eucgrad{\ol{f}}(C^*)$.
However, using Equation~\eqref{eq:Lagr_grad} again we know that $(C^*)^\top SC^* = \id_N$, which gives us the following identity:
\begin{equation} \label{eq:lagmult_from_C}
    \eps^* = \frac{1}{2} (C^*)^\top \eucgrad{\ol{f}}(C^*).
\end{equation}

\section{Implementing the Riemannian Hessian} \label{sec:hessian}

Recall that in Newton's Method we need to find $v \in T_xX$ such that
\begin{equation} \label{eq:newton}
  \riemhess{f}(x)(v) = -\riemgrad{f}(x).
\end{equation}
We will call this equation \emph{Newton's Equation} throughout this paper.
If we are planning to solve this equation using a package such as \verb+NumPy+, we need to address two issues:
(1) the Riemannian Hessian for the Stiefel manifold (Equation~\eqref{eq:hessian_st}) and for the Grassmannian (Equation~\eqref{eq:hessian_gr}) are not just multiplying the tangent vector on the left, which is the format assumed by most solvers;
(2) the solution $v$ obtained does not necessarily satisfy the constraint $v \in T_xX$.
In what follows, we will provide two detailed implementations of the Hessian in the Stiefel manifold that fix both issues.
However, as it will become clear, both approaches can be adapted to other manifolds as well.

\subsection{Intrinsic Hessian} \label{subsec:intrinsic_hessian}

Given an orthonormal basis $\cbr{v_1, \ldots, v_d}$ for a vector space $V$ endowed with inner product $\inner{\cdot}{\cdot}$, we can write any vector $v \in V$ as:
\begin{equation} \label{eq:v_as_lincomb_basis}
  v = \sum_{i=1}^d \inner{v}{v_i}v_i.
\end{equation}
We also know that any operator $T:V \to V$ can be represented as a matrix as follows:
\begin{equation} \label{eq:matrix_rep}
  [T]_{ij} = \inner{v_i}{T(v_j)},
\end{equation}
being $i, j = 1, \ldots, d$.
Consequently, if $(X, \inner{\cdot}{\cdot})$ is a Riemannian manifold and $\cbr{v_1, \ldots, v_d}$ is an orthonormal basis for $T_xX$, the matrix representation of $\riemhess{f}(x)$ is:
\begin{equation} \label{eq:H_ij_innerprod}
  [\riemhess{f}(x)]_{ij} = \inner{v_i}{\riemhess{f}(x)(v_j)},
\end{equation}
being $i, j = 1, \ldots, d$.

It is straightforward to implement Equation~\eqref{eq:H_ij_innerprod}, but it is not very efficient because we have to iterate over the basis and compute $\riemhess{f}(C)(v_j)$ for all $j=1,\ldots,d$, which can be costly.
The advantage of this implementation is that we do not need extra dimensions and terms as we do in the other one we will see in Subsection~\ref{subsec:extrinsic_hessian}.
However, this implementation is intrinsic to the manifold, which allows us to analyze the spectrum of the Hessian (see Section~\ref{sec:results}).
Moreover, it is easy to check that the implementation is correct because of the following identity:
\begin{equation} \label{eq:hess_deriv}
  [\riemhess{f}(x)]_{ii}
  = \inner{v_i}{\riemhess{f}(x)(v_i)}
  = \od[2]{}{t}f(\gamma_{v_i}(0)),
\end{equation}
being $\gamma_{v_i}(t)$ the geodesic of $X$ starting at $x$ in the direction $v_i$.
To verify this, we computed the right-hand side using finite differences with a step size of $10^{-4}$ and confirmed that the result satisfies Equation~\eqref{eq:hess_deriv} within a tolerance of $10^{-5}$.
Since both implementations of Newton's Method (the one presented in this subsection and the one in the next subsection) produce identical results (within the numerical precision of floating-point operations), it indicates that our implementations are indeed correct.

Another advantage of this implementation of the Hessian is that we can disregard small or negative eigenvalues.
Consider the case in which the (symmetric) $\riemhess{f}(x)$ operator has $d$ orthonormal eigenvectors $u_i$ with corresponding eigenvalues $\lambda_i$ such that $\lambda_i > \delta > 0$, being $\delta > 0$ a chosen cutoff threshold.
Then, the solution to Newton's Equation can be approximated as:
\begin{equation} \label{eq:solve_newton_eigenvalues}
  v = -\sum_{\lambda_j > \delta} \frac{\inner{\riemgrad{f}(x)}{u_j}}{\lambda_j} u_j.
\end{equation}
We will refer to this method as the \emph{modified Riemannian Newton's Method} (mRNM) and we will see in Section~\ref{sec:results} that the choice of $\delta$ has a significant impact in the performance of Newton's Method on the Stiefel manifold.

Before we move forward to explain how we compute the Hessian for the Stiefel manifold and the Grassmannian, let us adapt Equation~\eqref{eq:H_ij_innerprod} to the case in which the basis is indexed by two indices.
Let $\mc{B} \ceq \cbr{v_{ij} : i \in J, \, j \in J'}$ be a basis for a subspace of $\mat{d}{N}$.
Then, to compute the Hessian, we need to order this basis.
There are two main conventions for this:
running first over $i$ (rows) and then over $j$ (columns) or vice-versa.
We will adopt the convention of running first over the rows in this paper.
By doing this, we can compute the matrix representation of the Hessian as follows:
\begin{equation} \label{eq:H_ijkl_innerprod}
  [\riemhess{f}(x)]_{(i+j \cdot |J|, \, k+l \cdot |J|)} = \inner{v_{ij}}{\riemhess{f}(x)(v_{kl})},
\end{equation}
being $|J|$ the cardinality of the set $J$.
One important observation to the reader interested in implementing this is that $[\riemhess{f}(x)] \in \mat{|J||J'|}{|J||J'|}$, which means that the solution to Newton's Equation~\eqref{eq:newton} and the gradient should be written in the basis $\mc{B}$ and not in the canonical basis of $\mat{d}{N}$.

Moving to the Stiefel manifold and to the Grassmannian, we know that (see \cite{edelman1998}) any vector $\mu \in T_C\st{d}{N}$ can be written as:
\begin{equation}
  \mu = C\nu + \eta,
\end{equation}
being $\nu \in \mat{N}{N}$ a skew-symmetric matrix and $\eta \in T_C\gr{N}{d}$.
Moreover, any $\eta \in T_C\gr{N}{d}$ can be written as:
\begin{equation}
  \eta = C_{\perp}\xi,
\end{equation}
being $\xi \in \mat{d-N}{N}$ and $C_{\perp} \in \mat{d}{d-N}$ such that $\begin{bmatrix} C & C_{\perp} \end{bmatrix} \in \st{d}{d}$.
Now, if we consider the canonical basis of $\mat{d-N}{N}$ formed by the matrices $E_{kl}$ such that all entries are $0$ except $(k, l)$, which is $1$, we obtain an orthonormal basis for $T_C\gr{N}{d}$:
\begin{equation}
  \mc{B}^{\Gr} \ceq \cbr{C_{\perp}E_{kl} : k=1,\ldots,d-N, \, l=1,\ldots,N}.
\end{equation}
Ordering this basis as mentioned above, we can easily compute the Hessian intrinsic to the Grassmannian.
For the Stiefel manifold, on the other hand, we have that
\begin{equation}
  \mc{B}^{\ST} \ceq \cbr{C\asym{\sqrt{2}E_{ij}} : i=2,\ldots,N, \, j=1,\ldots,i-1} \cup \mc{B}^{\Gr}
\end{equation}
is an orthonormal basis for $T_C\st{d}{N}$.
Ordering this basis by running first over $C\asym{E_{ij}}$ and then over $\mc{B}^{\Gr}$, we can compute the Hessian intrinsic to the Stiefel manifold as well.

Before moving to the extrinsic implementation, let us make two remarks:
(a) in order to compute $C_{\perp}$, one can initialize this matrix randomly and then use the Gram--Schmidt algorithm to obtain its columns.
Observe that we need to initialize the Gram--Schmidt algorithm with the columns of $C$, though, because in the end we want to obtain $\begin{bmatrix} C & C_{\perp} \end{bmatrix} \in \st{d}{d}$.
Moreover, we also need to use the inner product defined in Equation~\eqref{eq:metric_st} to take the matrix $S$ into account.
(b) In practice, even though the solution to Newton's Equation~\eqref{eq:newton} is written in terms of an intrinsic basis, the elements of the basis are usually written in terms of the canonical basis of an underlying Euclidean space.
As an example, we have $N(d-N)$ elements in the basis of the Grassmannian, $\mc{B}^{\Gr}$, but each element in this basis, $C_{\perp}E_{kl}$, lives in $\mat{d}{N}$.
So, when we compute the tangent vector obtained from solving Newton's Equation, it will also live in $\mat{d}{N}$.
Indeed, if each $\eta_{kl} \in \br$ represents the coefficient associated to $C_{\perp}E_{kl}$, then
\begin{equation}
  \eta = \sum_{k=1}^{d-N}\sum_{l=1}^{N} \eta_{kl} C_{\perp}E_{kl} \in \mat{d}{N}.
\end{equation}

\subsection{Extrinsic Hessian} \label{subsec:extrinsic_hessian}

The extrinsic implementation is motivated by the case in which we do not have an explicit basis for the tangent space and by a faster and direct construction of the Hessian.
Observe that, even if we do not have an explicit basis for the tangent space of a manifold $X$, the manifold itself is usually a submanifold of $\br^d$ or a quotient of a submanifold (see \cite{boumal2023}).
Consequently, very often the tangent space can be described as $T_xX = \cbr{v \in \br^n : h(x)v = 0_k}$, being $h:\br^n \to \mat{k}{n}$ a function of $x$.
Moreover, it is usually not hard to obtain a formula for the Hessian of a function defined on $X$ that can be applied to any $v \in \br^n$ instead of only $v \in T_xX$ (see Equations~\eqref{eq:hessian_st} and~\eqref{eq:hessian_gr}, for example).
In this case we can see $T_xX$ as a subspace of $\br^n$ and use the canonical basis of $\br^n$ to construct the (extended version of the) Hessian.\footnote{This should not be confused with the Hessian of the extension $\ol{f}$.}
Explicitly:
\begin{equation} \label{eq:extrinsic_hessian}
  [\riemhess{f}(x)]_{ij} = \inner{e_i}{\riemhess{f}(x)(e_j)}.
\end{equation}
However, in this representation the solution to Newton's Equation~\eqref{eq:newton} may not be in $T_xX$.
To fix this issue, we can force the solution to be in $T_xX$ by augmenting the Hessian and the gradient as follows:
\begin{equation} \label{eq:augmented_newton}
  \begin{bmatrix}
    [\riemhess{f}(x)] \\
    h(x)
  \end{bmatrix}
  v
  =
  -\begin{bmatrix}
    [\riemgrad{f}(x)] \\
    0_k
  \end{bmatrix}.
\end{equation}
This augmented version of Newton's Equation is also straightforward to solve using a solver such as \verb|numpy.linalg.solve| once we know the matrix representation of the Hessian and who $h$ is.
Let us now show how this can be done for the Stiefel manifold.

The first observation we have to make is that the tangent space in this case is a subspace of $\mat{d}{N}$.
Consequently, we will adopt the same convention used in the previous subsection, i.e., the Hessian will be computed according to Equation~\eqref{eq:H_ijkl_innerprod}.
By doing this, observe that we need to vectorize Equation~\eqref{eq:augmented_newton}, which means we actually want to solve
\begin{equation} \label{eq:augmented_matrix_newton}
  \begin{bmatrix}
    [\hessSt{f}(C)] \\
    h(C)
  \end{bmatrix}
  \vect{\mu}
  =
  -\mathrm{vec}
  \begin{bmatrix}
    [\gradSt{f}(C)] \\
    0_N
  \end{bmatrix},
\end{equation}
being $\mathrm{vec}$ the \emph{vectorization} operation defined as:
\begin{equation}
  M \ceq
  \begin{bmatrix}
    M_{11} & \ldots & M_{1n} \\
    \vdots & \ddots & \vdots \\
    M_{m1} & \ldots & M_{mn}
  \end{bmatrix}
  \leadsto
  \vect{M} \ceq
  \begin{bmatrix}
    M_{11} \\
    \vdots \\
    M_{m1} \\
    M_{12} \\
    \vdots \\
    M_{mn}
  \end{bmatrix}.
\end{equation}
Now that we know which equation we have to solve and how to solve it, let us start figuring out who $h$ is so we can plug it in Equation~\eqref{eq:augmented_matrix_newton}.

Recalling that
\begin{equation}
  T_C\st{d}{N} = \cbr{\mu \in \mat{d}{N} : C^{\top}S\mu + \mu^{\top}SC = 0_N},
\end{equation}
we can vectorize the constraint $\mu$ to find $h$.
Using the very handy identity $\vect{ABC} = (C^{\top} \otimes A)\vect{B}$, we have:\footnote{Again, $\otimes$ denotes the Kronecker product.}
\begin{align}
  \begin{split}
    \vect{C^{\top}S\mu + \mu^{\top}SC}
    & = \vect{C^{\top}S\mu} + \vect{\mu^{\top}SC}\\
    & = \vect{C^{\top}S\mu\id_N} + \vect{\id_d\mu^{\top}SC}\\
    & = (\id_N \otimes C^{\top}S)\vect{\mu}
    + (C^{\top}S \otimes \id_d)\vect{\mu^{\top}}.
  \end{split}
\end{align}
The first term is already as we want, but the second we need to develop further.
Observe that when we multiply a matrix $M$ and a vector $v$ and we swap two rows of the vector, say $v_i$ and $v_j$, the result of this product is the same as if we had swapped the $i$-th and $j$-th columns of $M$ instead of the rows of $v$.
Consequently, since $\vect{\mu^{\top}}$ is just a permutation of the rows of $\vect{\mu}$, we can permute the columns of $(C^{\top}S \otimes \id_d)$ instead.
Denoting this operation as $\mathrm{perm}$, we have that:
\begin{equation}
  h(C) = \id_N \otimes C^{\top}S + \perm{C^{\top}S \otimes \id_d}.
\end{equation}
To implement the equation above, instead of accessing the columns of $C^{\top}S \otimes \id_d$ in the order $[0, 1, 2, \ldots]$, we access it according to $[i + j \cdot d]$, running first over $j=0,\ldots,N-1$ and then over $i=0,\ldots,d-1$.

Now let us move to the Hessian.
Recall that in the Stiefel manifold the Hessian is given by:
\begin{equation} \label{eq:hessian_st_recall}
  \hessSt{f}(C)(\mu)
  = \riemproj_C^{\ST}\del{S^{-1}\euchess{\ol{f}}(C)(\mu)
  - \mu\,\sym{\trnp{C}\eucgrad{\ol{f}}(C)}},
\end{equation}
being
\begin{equation}
  \riemproj_C^{\ST}(M)
  = M - C\sym{C^{\top}SM}
  = M - C\frac{C^{\top}SM + M^{\top}SC}{2}.
\end{equation}
Vectorizing the projection using the same ideas presented above, we obtain the following:
\begin{equation}
  \frac{1}{2}\del{\id_N \otimes (2\id_d - CC^{\top}S) - \perm{C^{\top}S \otimes C}}\vect{M}.
\end{equation}
Moving to what is inside parenthesis in Equation~\eqref{eq:hessian_st_recall}, we need to handle the mismatch in the dimensions of $S^{-1} \in \mat{d}{d}$, $\euchess{\ol{f}}(C) \in \mat{dN}{dN}$ and $\mu \in \mat{d}{N}$.
To fix this issue, we need to interpret $\euchess{\ol{f}}(C)(\mu)$ as the product $\euchess{\ol{f}}(C)\vect{\mu} \in \br^{dN}$ and then, in order to multiply by $S^{-1}$, we need to unvectorize this vector to $\unvec{\euchess{\ol{f}}(C)\vect{\mu}}{d}{N}$.
Now that we fixed this issue, we can vectorize everything inside parenthesis, obtaining:
\begin{equation}
  \del{(\id_N \otimes S^{-1})\euchess{\ol{f}}(C) - \sym{C^{\top}\eucgrad{\ol{f}}(C)} \otimes \id_d}\vect{\mu}.
\end{equation}
Putting everything together, the final matrix representation of the Hessian for the Stiefel manifold is:
\begin{multline}
  [\hessSt{f}(C)] = 
    \frac{1}{2}\del{\id_N \otimes (2\id_d - C\trnp{C}S)
    - \perm{\trnp{C}S \otimes C}} \\
    \cdot \del{(\id_N \otimes S^{-1}) \euchess{\ol{f}}(C)
    - \sym{\trnp{C}\eucgrad{\ol{f}}(C)} \otimes \id_d}.
\end{multline}
Moreover,
\begin{equation}
  [\hessSt{f}(C)]_{(i+j \cdot d, \, k+l \cdot d)}
  = \inner{E_{ij}}{\hessSt{f}(C)(E_{kl})},
\end{equation}
showing that the vectorization procedure described above is equivalent to computing the Hessian in the canonical basis of the underlying Euclidean space.
However, it is (arguably) easier to compute the full matrix representation directly than to compute entrywise.

The Hessian for the Grassmannian can be computed analogously.
In this case we have:
\begin{equation}
  h(C) = \del{\id_N \otimes C^{\top}S}
\end{equation}
and the Hessian is:
\begin{equation}
  [\hessGr{f}(C)]
  =
  \del{\id_N \otimes (\id_d - CC^{\top}S)S^{-1}}\euchess{\ol{f}}(C)
  - (\eucgrad{\ol{f}}(C))^{\top}C \otimes \id_d.
\end{equation}


\section{Some important remarks} \label{sec:remarks}

The Grassmannian encodes two constraints:
$C^{\top}SC = \id_N$ and the equivalence relation given by $C \sim C'$ if, and only if, $C = C'M$ for some $M \in \ro(N)$.
The first constraint is easy to encode using Lagrange multipliers (see Subsection~\ref{subsec:newton_lm}).
However, to the best of our knowledge, it is unclear whether the equivalence relation can also be encoded using Lagrange multipliers.
Consequently, a more appropriate comparison to Newton's Method with Lagrange multipliers is the version of Newton's Method defined on the Stiefel manifold, rather than on the Grassmannian.

In practice the situation is more complicated because the cost function to be optimized (including our cost function) may be defined in the Grassmannian (or any quotient manifold) and lifting this function to the Stiefel manifold (or to the total space, in general) leads to numerical issues (see \cite[Chapter 9]{boumal2023}).
While it might seem reasonable to consider Lagrange multipliers as an alternative, as we will show in Section~\ref{sec:results}, this approach is outperformed by the Riemannian methods.

Having said that, it is not always evident that an optimization problem is defined on a quotient manifold (see \cite{bernard2021}).
Moreover, it is usually easier to compute the Riemannian gradient, Hessian, and exponential map on submanifolds of a Euclidean space than on quotient manifolds.
With this in mind, in Subsection~\ref{subsec:intrinsic_hessian} we showed how to adapt the well known technique of truncating the spectrum of the Hessian to the Riemannian setting and in Section~\ref{sec:results} we show that this technique mitigates the numerical issues associated with the Hessian and that it leads to good empirical performance.

From an implementation standpoint, both Riemannian methods are very similar.
In fact, as discussed in (\cite[Chapter 9]{boumal2023}), the gradient of a cost function $f:X \to \br$ and its lift $\ol{f}:\ol{X} \to \br$ are the same.
Moreover, the Hessians are also very similar.
In the total space it is given by:
\begin{equation}
  \mathrm{Hess}^{\ol{X}}{\ol{f}}(x)(v)
  =
  \riemproj_x^{\ol{X}}\del{
    \riemproj_x^{\ol{X}}\del{\euchess{\ol{\ol{f}}}(x)(v)}
    +
    \riemproj_x^{\ol{X}}(v)\eucgrad{\ol{\ol{f}}}(x)
  },
\end{equation}
while in the quotient it is given by:
\begin{equation}
  \mathrm{Hess}^X{f}(x)(v)
  =
  \riemproj_x^X\del{
    \riemproj_x^{\ol{X}}\del{\euchess{\ol{\ol{f}}}(x)(v)}
    +
    \riemproj_x^{\ol{X}}(v)\eucgrad{\ol{\ol{f}}}(x)
  }.
\end{equation}

As one can see, the only difference is the outer projection.
One may interpret the fact that the Hessians are not equivalent due to the fact that, in general, $\mathrm{grad}^{\ol{X}}\ol{f}(x) \neq \mathrm{grad}^{\ol{X}}\ol{f}(y)$ even when $x \sim y$.\footnote{Both gradients project to $\mathrm{grad}^Xf([x])$, though.}
Consequently, since the Hessian on the total space measures variations of the gradient in directions $v \in T_x\ol{X} = T_{[x]}X \oplus V_x$, it may not vanish when acting on vectors orthogonal to $T_{[x]}X$.

It is also worth mentioning that we cannot, in general, lift geodesics from a quotient manifold to the total space in such a way that, given $[x] \in X$, $v \in T_{[x]}X$ and a geodesic $\gamma_v:T \to X$ emanating from $([x], v)$, we have:
\begin{equation} \label{eq:equiv_exp}
  \mathrm{exp}^X([x], v) = [\mathrm{exp}^{\ol{X}}(x, v)].
\end{equation}
However, in practice the quotient structure of $X$ is usually obtained via a Lie group action (satisfying some requirements, see \cite[Theorem 9.18]{boumal2023}) and in this case the result holds (see \cite{delhoyo2016}).
Since this is the case for the Grassmannian, with the Lie group being $\ro(N)$, Equation~\eqref{eq:equiv_exp} holds.

Overall, despite the fact that the Riemannian Newton's Method on a quotient manifold and on its respective total space are not equivalent, in practice they can behave quite similarly, as shown in Section~\ref{sec:results} when the spectrum of the Hessian on the Stiefel manifold is truncated.


\section{Hartree--Fock} \label{sec:hartree_fock}

The application we are interested in this work is the \emph{Hartree--Fock Method}, which we will define as a constrained optimization problem that aims to solve an approximation of the Schrödinger equation.
To describe this problem mathematically, let a \emph{Slater determinant} be a wave function (i.e., an element of $L^2(\br^{3N})$, being $N \in \bn$ the number of electrons in the system) that can be written as:
\begin{equation}
  \begin{split}  
    (\phi_1 \wedge \ldots \wedge \phi_N)(\vb{r}_1, \ldots, \vb{r}_N)
    & \ceq \frac{1}{\sqrt{N!}}\det\begin{bmatrix} \phi_1(\vb{r}_1) & \ldots & \phi_N(\vb{r}_1) \\ \vdots & \ddots & \vdots \\ \phi_1(\vb{r}_N) & \ldots & \phi_N(\vb{r}_N) \end{bmatrix} \\
    & = \frac{1}{\sqrt{N!}}\sum_{\sigma \in S_N} \mathrm{sign}(\sigma)\phi_{\sigma(1)}(\vb{r}_1) \ldots \phi_{\sigma(N)}(\vb{r}_N),
  \end{split}
\end{equation}
being $\phi_i \in L^2(\br^3)$.
Denoting by $\nabla_{\vb{r}_i}^2$ the Laplacian that considers only the derivatives with respect to $\vb{r}_i = (r_{1i}, r_{2i}, r_{3i}) \in \br^3$, we define the \emph{electronic Hamiltonian} as:
\begin{equation}
  \Hel
  \ceq \sum_{i=1}^N \del{
    -\frac{1}{2}\nabla_{\vb{r}_i}^2
    - \sum_{j=1}^M \frac{Z_j}{\abs{\vb{r}_i - \vb{R}_j}}
    + \sum_{j > i}^N \frac{1}{\abs{\vb{r}_i - \vb{r}_j}}
  },
\end{equation}
being $M$ the number of nuclei in the system and $Z_j$ the atomic number of the $j$-th nucleus (after we fix an order for the nuclei).
It is important to mention that $\vb{R}_j \in \br^3$ is assumed to be fixed\footnote{This is known as the \emph{Born--Oppenheimer approximation}.} and that $\Hel$ is defined on (a subspace of) $L^2(\br^{3N})$.
Now, fixing a finite set $\mathscr{B} \ceq \qty{\psi_1, \ldots, \psi_d} \subset L^2(\br^3)$ such that
\begin{equation}
  S_{ij} \ceq \inner{\psi_i}{\psi_j} = \int\limits_{\br^3} \psi_i(\vb{r})\psi_j(\vb{r})\dd\vb{r},
\end{equation}
we define Hartree--Fock as the following constrained optimization problem:
\begin{alignat}{2}
    & \text{minimize}   & \quad & \inner{\phi}{\Hel(\phi)}, \\
    & \text{subject to} & & \phi = \phi_1 \wedge \ldots \wedge \phi_N, \\
    &                   & & \phi_j \ceq c_{1j}\psi_1 + \ldots + c_{dj}\psi_d, \\
    &                   & & \begin{bmatrix}
      c_{11} & \ldots & c_{1N} \\
      \vdots & \ddots & \vdots \\
      c_{d1} & \ldots & c_{dN}
    \end{bmatrix}  \in \gr{N}{d} \,.
\end{alignat}

Observe that, since $\mathscr{B}$ is fixed, this problem is indeed defined in the Grassmannian.
We will write down the cost function in terms of $c_{ij}$ in a moment, but before that let us explain the physical meaning behind this problem.
The cost function we are minimizing represents the electronic energy of a system with $2N$ electrons (a molecule, for example).
We are considering only systems with $2N$ electrons instead of $N$ because in this case we can treat both spins of an electron equally and, consequently, use the same Slater determinant for both spins.
The constraint serves two purposes:
it ensures that the wave function remains orthonormal and it encodes the fact that state spaces in Quantum Mechanics are defined as the projective space of $L^2(\br^{3N})$.
See \cite{dasilva2024} for the details of this method, including the unrestricted version in which the spins can differ.
Let us now describe this minimization problem explicitly so we can compute the derivatives of the cost function.

Let $\oneh(\vb{r})$ represent the \emph{one-electron operator}, which is defined as follows:
\begin{equation}
  \oneh(\vb{r})\psi(\vb{r})
  =
  - \frac{1}{2}\nabla^2\psi(\vb{r})
  - \sum_{j=1}^M \frac{Z_j}{\abs{\vb{r} - \vb{R}_j}}\psi(\vb{r}).
\end{equation}
The motivation behind this operator is that $\psi \in L^2(\br^3)$ represents an electron, the first term measures the kinetic energy of this electron and the second term measures the electric potential energy of the electron with respect to all nuclei of a given system.
Exploiting the basis set $\mathscr{B}$, we can define the following matrices:
\begin{equation}
  h_{ij} \ceq \int\limits_{\br^3} \psi_i(\vb{r})\oneh(\vb{r})\psi_j(\vb{r})\dd\vb{r},
\end{equation}
which represents the one-electron operator,
\begin{equation}
  g_{ijkl} \ceq \int\limits_{\br^6} \psi_i(\vb{r}_1)\psi_j(\vb{r}_2)
  \frac{1}{\abs{\vb{r}_1 - \vb{r}_2}} \psi_k(\vb{r}_1)\psi_l(\vb{r}_2)
  \dd\vb{r}_1\dd\vb{r}_2,
\end{equation}
which represents a two-electron operator that measures the Coulomb interaction between two electrons, and
\begin{equation}
  F_{ij} \ceq h_{ij} + \sum_{k,l=1}^d \sum_{\nu=1}^N c_{k\nu}c_{l\nu}\qty(2g_{ikjl} - g_{ijkl}),
\end{equation}
which is known as the \emph{Fock matrix}.
Using all these matrices, we can rewrite the Hartree--Fock problem as follows:
\begin{alignat}{2}
  & \text{minimize}   & \quad & f(C) \ceq \sum_{i,j=1}^d \sum_{\mu=1}^N c_{i\mu}c_{j\mu}(h_{ij} + F_{ij}), \\
  & \text{subject to} & & C \ceq \begin{bmatrix}
    c_{11} & \ldots & c_{1N} \\
    \vdots & \ddots & \vdots \\
    c_{d1} & \ldots & c_{dN}
  \end{bmatrix} \in \gr{N}{d}.
\end{alignat}
See \cite{szabo96} for more details about the method from a Chemistry perspective.

To conclude, we need to compute the Euclidean gradient and Hessian of the (lift/extension of the) function $f$, which we will denote as $\ol{f}$.\footnote{Observe that in this case we can use the exact same expression for $\ol{f}$ and $f$, but to keep the notation compatible with the previous sections we will use $\ol{f}$.}
The Euclidean gradient of $\ol{f}$ is given by:
\begin{equation}\label{eq:grad_hf}
  \eucgrad{\ol{f}}(C) = 2FC,
\end{equation}
and the entries of the Euclidean Hessian, $\euchess{\ol{f}}$, are given by:
\begin{equation} \label{eq:hessian_energy}
  \pdv{}{x_{ki}}{x_{lj}}\ol{f}\del{C}
  = 2 \qty(F_{kl} \delta_{ij} + \sum_{r,s=1}^d c_{ri} c_{sj}
  (2g_{krls} - g_{kslr} - g_{klrs})),
\end{equation}
being $\delta_{ij}$ the Kronecker delta function.
We could not find a direct computation for the matrix $\euchess{\ol{f}}$, which means we construct it iteratively using the equation above and the convention adopted in Equation~\eqref{eq:H_ijkl_innerprod} for the indices.
See \cite[Chapter 4]{dasilva2024} for more details.
That is it, we have everything we need to implement the algorithms described in the previous sections to solve the Hartree--Fock problem.


\section{Results} \label{sec:results}

All methods discussed in this paper are available in the package called \emph{Grassmann} \cite{aotograssmann2023}.
They were used to solve the Hartree--Fock energy minimization problem for the subset of molecules in the G2/97 dataset \cite{curtiss1997} with an even number of electrons.
This subset, consisting of 125 molecules, will be referred simply as \emph{the dataset} throughout this section.

For convenience, we summarize below the notation used to refer to the methods compared in this section:

\begin{itemize}
  
  \item \textbf{NMLM}: Newton's Method with Lagrange Multipliers (Section~\ref{subsec:newton_lm});
  
  \item \textbf{RNM-Gr}: Riemannian Newton's Method on the Grassmannian (Section~\ref{subsec:newton_gr});
  
  \item \textbf{RNM-St}: Riemannian Newton's Method on the Stiefel manifold (Section~\ref{subsec:newton_st});
  
  \item \textbf{mRNM-St$_{\delta}$}: modified Riemannian Newton's Method on the Stiefel manifold, with a cutoff $\delta$ used to discard small eigenvalues (see Equation~\eqref{eq:solve_newton_eigenvalues}).
  When not specified, $\delta$ defaults to $10^{-8}$.

\end{itemize}

We verified numerically that solving Newton's Equation~\eqref{eq:newton} using either the intrinsic or extrinsic Riemannian Hessian (see Section~\ref{sec:hessian}) produces the exact same steps for both manifolds.

Except for Subsection~\ref{sec:conv_neigh_water}, all methods used the \emph{superposition of atomic densities} (SAD) procedure to obtain the initial guess.
This initialization is standard in Hartree--Fock calculations, see~\cite{lehtola2019} for details.
Moreover, the Lagrange multipliers were initialized as $\eps = C^\top F C$, following Equations~\eqref{eq:lagmult_from_C} and~\eqref{eq:grad_hf}.

\subsection{Water} \label{sec:res_water}

We begin by examining the convergence behavior of the different methods on the water molecule, as one can see in Figure~\ref{fig:h2o_conv}.
The top panel shows the iterations on the $x$-axis and the energy difference to the minimum on the $y$-axis.
Since there is no ground truth minimum, we use the solution obtained by RNM-Gr as a reference and convergence is defined by the gradient norm dropping below $10^{-8}$.
The bottom panel shows the gradient norm at each iteration and low values indicate how well the method converged.

\begin{figure}[h!]
    \centering
    \includegraphics[width=1.0\textwidth]{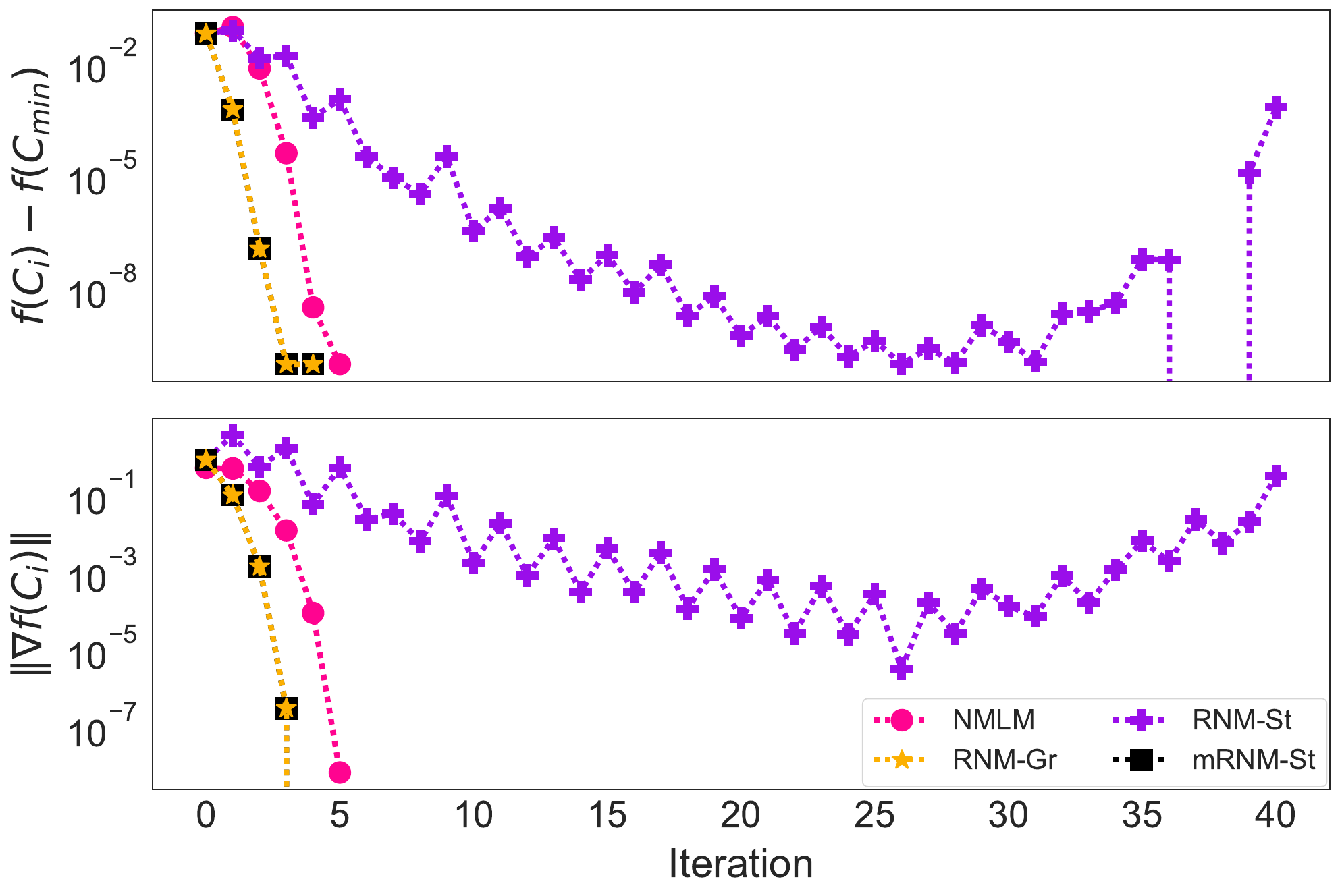}
    \caption{
      Convergence curves for the water molecule.
      Top: energy difference to minimum in each iteration.
      Bottom: gradient norm in each iteration.
    }
    \label{fig:h2o_conv}
\end{figure}

As expected for a Newton's Method, RNM-Gr converges in few iterations.
In contrast, RNM-St exhibits instability:
although it reaches a difference below $10^{-10}$ around the 25th iteration, the difference increases afterwards and the gradient norm never drops below $10^{-6}$.
Moreover, the matrices $C_i$ drift off the Stiefel manifold, measured by the norm $\norm{C_i^{\top}SC_i - \id_N}$ (see Equation~\eqref{eq:Lagr_grad}).
This behavior is expected due to the fact that $\hessSt{f}(\Cmin)$ is not positive-definite.
It is also known that $\dim{\ST} - \dim{\Gr}$ of its eigenvectors are orthogonal to $T_{\Cmin}\gr{N}{d}$ and the remaining $\dim{\Gr}$ eigenvalues and eigenvectors are equal to those of $\hessGr{f}(\Cmin)$ (at least in theory, see~\cite{boumal2023}).
Tables~\ref{tab:water_eigvals} and~\ref{tab:water_eigvecs} confirm this.
The largest eigenvalues and the corresponding eigenvectors of $\hessSt{f}(C_0)$ and $\hessGr{f}(C_0)$ are nearly identical and most of the remaining eigenvectors of $\hessSt{f}(C_0)$ are numerically orthogonal to $T_{\Cmin}\gr{N}{d}$.

Motivated by this, we introduce a modified version of RNM-St in which the eigenvalues below a threshold $\delta = 10^{-8}$ are discarded.
By doing this, we restrict the solution of Newton's Equation~\eqref{eq:newton} to a subspace that is numerically close to $T_C\gr{N}{d}$, as evidenced by the steps shown in Figure~\ref{fig:h2o_conv}.
The results show that mRNM-St mitigates the instability exhibited by RNM-St.

The NMLM initially increases the energy and the deviation from the Stiefel manifold (measured by $\norm{C_i^{\top}SC_i - \id_N}$), but it stabilizes quickly thereafter, requiring only one extra iteration compared to RNM-Gr to reach the solution.

\begin{table}[h]
  \setlength{\tabcolsep}{3.25pt}
  \centering
  \caption{
    Eigenvalues of the Hessians on the Stiefel manifold and on the Grassmannian at the first iteration of RNM-Gr and RNM-St for the water molecule.
  }
  \begin{tabularx}{\textwidth}{l|*{10}{>{\raggedleft\arraybackslash}X}}
    \toprule
    Method & \multicolumn{10}{c}{Eigenvalues} \\
    \midrule
    RNM-Gr & 86.04 & 84.44 & 84.3 & 83.99 & 83.41 & 83.22 & 80.7 & 80.59 & 12.17 & 9.12 \\
    RNM-St & 86.04 & 84.44 & 84.3 & 83.99 & 83.41 & 83.22 & 80.7 & 80.59 & 12.17 & 9.12 \\
    \midrule
    RNM-Gr & 8.95 & 8.31 & 8.11 & 7.9 & 7.81 & 7.72 & 7.09 & 6.85 & 6.37 & 5.84 \\
    RNM-St & 8.95 & 8.32 & 8.11 & 7.9 & 7.81 & 7.72 & 7.09 & 6.85 & 6.38 & 5.85 \\
    \midrule
    RNM-Gr & 5.83 & 5.71 & 5.64 & 5.44 & 5.25 & 5.18 & 5.14 & 5.02 & 4.9 & 4.8  \\
    RNM-St & 5.84 & 5.71 & 5.64 & 5.44 & 5.26 & 5.19 & 5.14 & 5.02 & 4.92 & 4.8 \\
    \midrule
    RNM-Gr & 4.67 & 4.33 & 4.28 & 4.27 & 2.7 & 2.27 & 1.92 & 1.61 & 1.46 & 1.24 \\
    RNM-St & 4.67 & 4.33 & 4.29 & 4.28 & 2.71 & 2.28 & 1.94 & 1.61 & 1.46 & 1.25 \\
    \midrule
    RNM-St & -0.0008 & -0.001 & -0.0023 & -0.0029 & -0.0103 & -0.0188 & -0.021 & -0.0297 & -0.0322 & -0.0531 \\
    \bottomrule
  \end{tabularx}
  \label{tab:water_eigvals}
\end{table}

\begin{table}[h]
  \centering
  \caption{
    Top: inner products between the eigenvectors of the Hessian on the Stiefel manifold and the corresponding eigenvectors on the Grassmannian, following Table~\ref{tab:water_eigvals}.
    Bottom: squared norm of the projection of the remaining eigenvectors on the Stiefel manifold onto the tangent space of the Grassmannian.
  }
  \begin{tabularx}{\textwidth}{*{10}{>{\raggedleft\arraybackslash}X}}
    \toprule
    \multicolumn{10}{c}{Inner product} \\
    \midrule
    1.0000 & 1.0000 & 1.0000 & 1.0000 & 1.0000 & 1.0000 & 1.0000 & 1.0000 & 1.0000 & 0.9999 \\
    \midrule
    0.9998 & 0.9996 & 0.9996 & 0.9995 & 0.9997 & 0.9999 & 0.9999 & 0.9999 & 0.9995 & 0.9135 \\
    \midrule
    0.9137 & 0.9991 & 0.9996 & 0.9994 & 0.9995 & 0.9988 & 0.9998 & 0.9987 & 0.9961 & 0.9978 \\
    \midrule
    0.9995 & 0.9966 & 0.9951 & 0.9967 & 0.9976 & 0.9984 & 0.9955 & 0.9994 & 0.9998 & 0.9952 \\
    \toprule
    \multicolumn{10}{c}{Squared norm of the projection} \\
    \midrule
    0.0001 & 0.0002 & 0.0007 & 0.0008 & 0.0018 & 0.0032 & 0.0035 & 0.0084 & 0.0085 & 0.0210 \\
    \bottomrule
  \end{tabularx}
  \label{tab:water_eigvecs}
\end{table}

\subsection{Spectrum}

The discussion in Subsection~\ref{sec:res_water} on the water molecule shows that RNM-Gr and mRNM-St are very similar because the spectra of the Hessians on the Grassmannian and the Stiefel manifold, excluding the smallest $\dim{\ST} - \dim{\Gr}$ eigenvalues and their corresponding eigenvectors, are numerically close.
Since examining the individual spectrum of each molecule in the dataset is impractical, we extend this analysis to the full dataset using the root mean squared difference between the eigenvalues of both Hessians:
\begin{equation} \label{eq:D_spectrum}
    D \coloneqq \sqrt{\frac{1}{\dim{\Gr}} \sum_{i=1}^{\dim{\Gr}} \left(\lambda_i^{\Gr} - \lambda_i^{\ST}\right)^2}\,.
\end{equation}
We assume the eigenvalues $\{\lambda_i^{\Gr}\}_{i=1}^{\dim{\Gr}}$ and $\{\lambda_i^{\ST}\}_{i=1}^{\dim{\ST}}$ are in descending order, as in Table~\ref{tab:water_eigvals}, and we use only the first $\dim{\Gr}$ eigenvalues of $\hessSt{f}(C)$ to compute $D$.

Figure~\ref{fig:D_per_mol} presents the values of $D$ for all molecules in the dataset.
All eigenvalues were computed at the initial guess $C_0$.
The results show that $D$ is small for the entire dataset:
no value exceeds $0.08$ and 75\% fall below $0.02$, indicating that the spectra of both Hessians are numerically close.
In addition to plotting the values of $D$, we use colors and markers to convey more information about how the two methods compare for each molecule.
In general, molecules with low values of $D$ exhibit very similar convergence patterns for RNM-Gr and mRNM-St.
Both methods tend to converge to the same critical point and very often in the same number of iterations.
In some cases, however, RNM-Gr requires one extra iteration (highlighted in yellow).
Conversely, calculations where one method fails to converge or converges to a different critical point tend to exhibit larger values of $D$.
This is consistent with the fact that a large $D$ indicates the solutions to Equation~\eqref{eq:solve_newton_eigenvalues} on the two manifolds are not the same, leading to different optimization steps.

Some outliers are worth mentioning.
For instance, the H$_2$CO molecule has a small $D \approx 0.01$, yet mRNM-St and RNM-Gr converge to different critical points.
In contrast, the NH molecule exhibits the largest $D$ in the dataset, but both methods converge to the same critical point, with RNM-Gr requiring two extra iterations.
While a detailed analysis of each case is beyond the scope of this work, the results suggest that a closer inspection of the Hessians spectra may help clarify the behavior of the outliers.

\begin{figure}[p]
  \caption{
    The root mean squared difference, $D$, between the eigenvalues of the Hessians on the Stiefel manifold and the Grassmannian.
  }
  \includegraphics[width=1.0\textwidth]{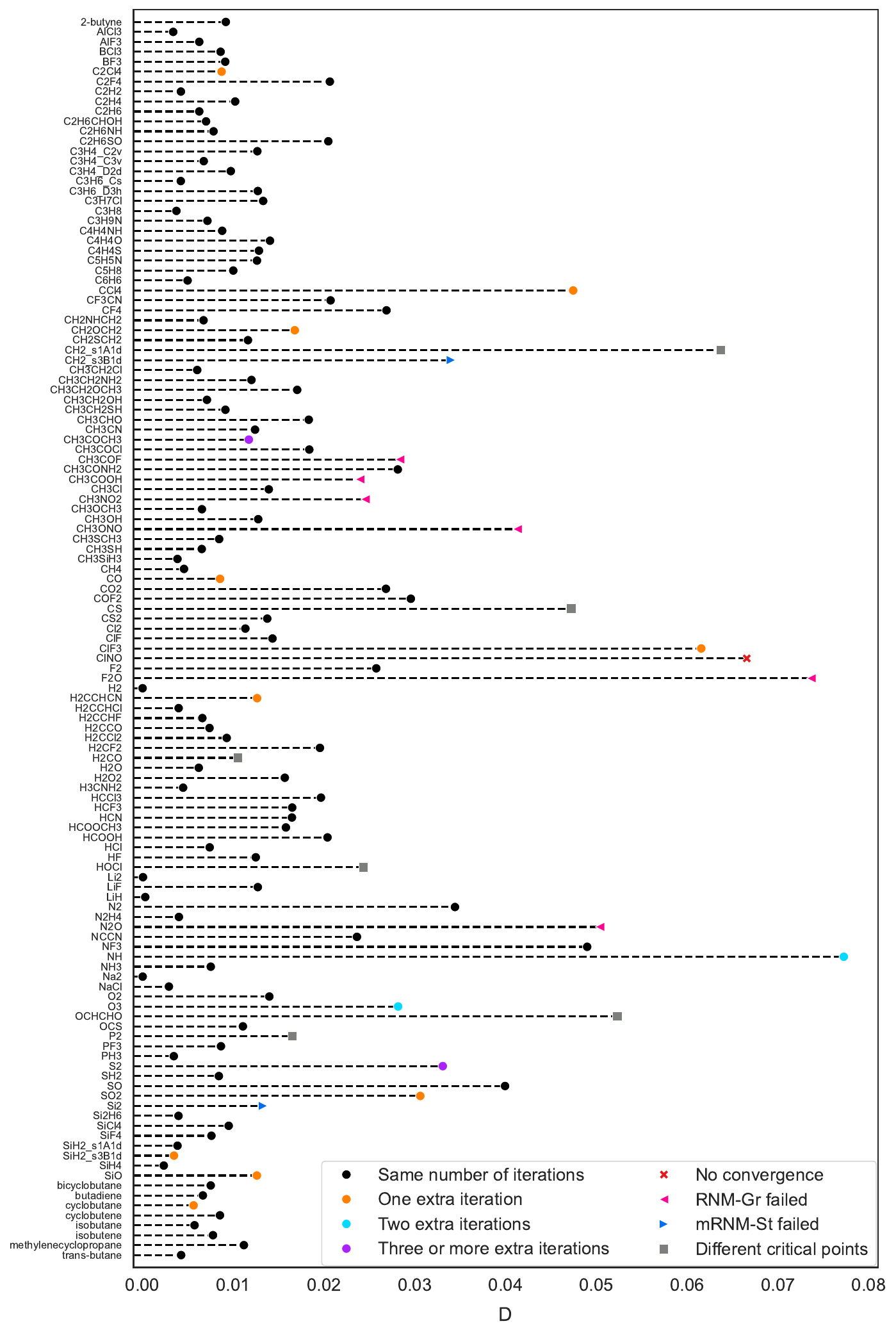}
  \label{fig:D_per_mol}
\end{figure}

\subsection{Statistical analysis}

We now turn to the overall statistics for the results obtained from all 125 molecules in the G2/97 dataset.
Table~\ref{tab:res_statistics} shows the number of converged calculations and the average number of iterations.
Recall that, since we do not know the ground truth minimum, we define convergence as the gradient norm dropping below $10^{-8}$.
Note also that the average number of iterations is computed using only those molecules for which the method converged.
Moreover, all calculations used the initial guess obtained from the SAD procedure, as previously mentioned.

The results in Table~\ref{tab:res_statistics} show that the Riemannian methods perform similarly, which is consistent with the results for the water molecule.
Both methods converge for more than 95\% of the molecules, with mRNM-St converging in few more cases than RNM-Gr.
In contrast, Newton's Method with Lagrange multipliers (NMLM) converged for only 73.6\% of the molecules.
The Riemannian methods also require, on average, fewer iterations to converge.
Since each iteration involves inverting the Hessian, reducing the number of iterations by nearly half represents a meaningful computational advantage.
Overall, we see that the Riemannian methods clearly outperform the NMLM method.

\begin{table}[h]
    \centering
    \caption{
        Number of converged molecules and average number of iterations for each method.
        Averages are computed over converged cases only.
    }
    \begin{tabularx}{\textwidth}{l >{\raggedleft\arraybackslash}X >{\raggedleft\arraybackslash}X}
        \toprule
        Method & Number of converged cases & Average number of iterations \\
        \midrule
        NMLM    & 92/125  (73.6\%) & 7.576 \\
        RNM-Gr  & 118/125 (94.4\%) & 4.220 \\
        mRNM-St & 122/125 (97.6\%) & 4.426 \\
        \bottomrule
    \end{tabularx}
    \label{tab:res_statistics}
\end{table}

\subsection{Ablating \texorpdfstring{$\delta$}{delta} in mRNM-St}

The Riemannian Newton's Method on the Stiefel manifold (RNM-St) never converges.
This motivated us to modify the method by truncating the spectrum of the Hessian.
Specifically, we discard eigenvalues smaller than a threshold $\delta$ when solving Newton's Equation~\eqref{eq:newton} (see Section~\ref{sec:hessian} for details).
By default, we use $\delta = 10^{-8}$ since it is numerically close to zero and is sufficient to ensure the Hessian is positive-definite.
However, we noticed that increasing this value may lead some molecules that previously did not converge to converge.
We therefore tested the impact of changing $\delta$ in the performance of mRNM-St over the entire dataset.

Table~\ref{tab:ablation_stiefel} shows that increasing $\delta$ actually reduces the number of converged cases.
However, the average number of iterations \emph{among converged cases} decreases as $\delta$ increases.
This happens because, for small values of $\delta$, some molecules require many iterations to converge, which increases the average.
These are exactly the molecules that fail to converge for larger values of $\delta$, and thus the cases that still converge at those values, do so in fewer iterations.
This indicates that the more difficult cases (those requiring many iterations) depend on the small eigenvalues.
If those are excluded by the threshold, convergence fails.
Only in a few cases (2 molecules) increasing $\delta$ results in convergence where it previously failed, but even in those cases the optimization failed after further increasing $\delta$.
These results suggest that using a small $\delta$ is a reasonable starting point and, if convergence is not achieved, then increasing it may help in some cases.
Moreover, the results did not change for $\delta$ between $10^{-8}$ and $10^{-3}$.

Detailed individual results for each molecule, whose statistical analysis have been discussed in this and in the previous subsections, are provided in the Supporting Information.

\begin{table}[h]
  \caption{Number of converged molecules for mRNM-St$_\delta$ and the average number of iterations for several values of $\delta$.}
  \centering
  \begin{tabularx}{\textwidth}{r >{\raggedleft\arraybackslash}X >{\raggedleft\arraybackslash}X}
      \toprule
      \texorpdfstring{$\delta$}{Delta} &  Number of converged cases & Average number of iterations \\
      \midrule
      $0.001$ & 122/125  (97.6 \%) &  4.43  \\
      $0.1$   & 119/125  (95.2 \%) &  4.25  \\
      $0.2$   & 117/125  (93.6 \%) &  4.17  \\
      $0.3$   & 114/125  (91.2 \%) &  4.16  \\
      $0.4$   & 114/125  (91.2 \%) &  4.13  \\
      $0.5$   & 107/125  (85.6 \%) &  4.02  \\
      $0.6$   & 104/125  (83.2 \%) &  3.97  \\
      $0.7$   & 100/125  (80.0 \%) &  3.99  \\
      $0.8$   & 86/125   (68.8 \%) &  3.95  \\
      $0.9$   & 80/125   (64.0 \%) &  3.93  \\
      $1.0$   & 68/125   (54.4 \%) &  3.84  \\
      \bottomrule                           
  \end{tabularx}
  \label{tab:ablation_stiefel}
\end{table}

\subsection{Performance profile for the number of iterations}

Although the previous subsections already indicate that the Riemannian methods outperform the classical Newton's Method with Lagrange multipliers, we also carried a \emph{performance profile} analysis, which is a standard practice in the literature (see~\cite{dolan2002} for details).

Recall that performance profiles compute a cumulative distribution of best results.
For a given $\tau \geq 1$, $\rho(\tau)$ is defined as the proportion of cases in which a method's performance is better than $\tau$ times the best case.
Naturally, the notion of “best” depends on the metric under consideration.
In this subsection, we use the number of iterations required for convergence, but other metrics will be considered later.

Formally, let $p$ index the 125 molecules in the dataset.
For each method $s$, we define:
\begin{equation} \label{eq:perf_prof_rho}
    \rho_s(\tau)
    \ceq \frac{\text{number of molecules $p$ with $r_{p,s} \leq \tau$}}{\text{number of molecules in the dataset}},
\end{equation}
being $r_{p,s}$ the performance ratio, defined as:
\begin{equation} \label{eq:perf_prof_r}
  r_{p,s} \ceq \frac{\text{number of iterations for molecule $p$ and method $s$}}
  {\text{minimum number of iterations among all $s$ for $p$ fixed}}.
\end{equation}

For example, for $s =$ RNM-Gr, the molecule H$_2$ converges in 2 iterations, which is the best case.
There are 20 molecules that converge in 3 iterations, 71 in 4 iterations, and so on.
In all these cases RNM-Gr is the best method (or it ties with mRNM-St).
Therefore:\footnote{We omit the remaining terms for brevity.}
\begin{equation}
    \rho_{\text{RNM-Gr}}(\tau) =
    \begin{cases}
        \frac{1}{125}       = 0.008, & \text{if $\tau \in [1, \frac{3}{2})$}, \\
        \frac{1+20}{125}    = 0.168, & \text{if $\tau \in [\frac{3}{2}, \frac{4}{2})$}, \\
        \frac{1+20+71}{125} = 0.736, & \text{if $\tau \in [\frac{4}{2}, \frac{5}{2})$}.
    \end{cases}
\end{equation}
Observe that, the faster $\rho_s(\tau)$ increases, the better the method, since more cases converge in fewer iterations.
Furthermore, as we increase $\tau$, $\rho_s(\tau)$ eventually reaches the proportion of molecules for which the method $s$ converged.

Figure~\ref{fig:perf_prof_niter} shows that both RNM-Gr and mRNM-St exhibit similar performance, with $\rho_{\text{mRNM-St}}$ slightly higher than $\rho_{\text{RNM-Gr}}$ for large $\tau$.
We also observe that increasing $\delta$ reduces the overall performance of mRNM-St for large $\tau$.
Finally, we can see that once again NMLM is outperformed by the Riemannian methods:
its $\rho$ reaches a lower value for large $\tau$, as expected, but its curve also does not increase as fast as RNM-Gr and mRNM-St.

\begin{figure}[h]
    \caption{
        Performance profile for the number of iterations required for convergence.
        Each curve shows the proportion $\rho$ of molecules in which a method's performance is better than $\tau$ times the best case.
    }
    \includegraphics[width=1.0\textwidth]{"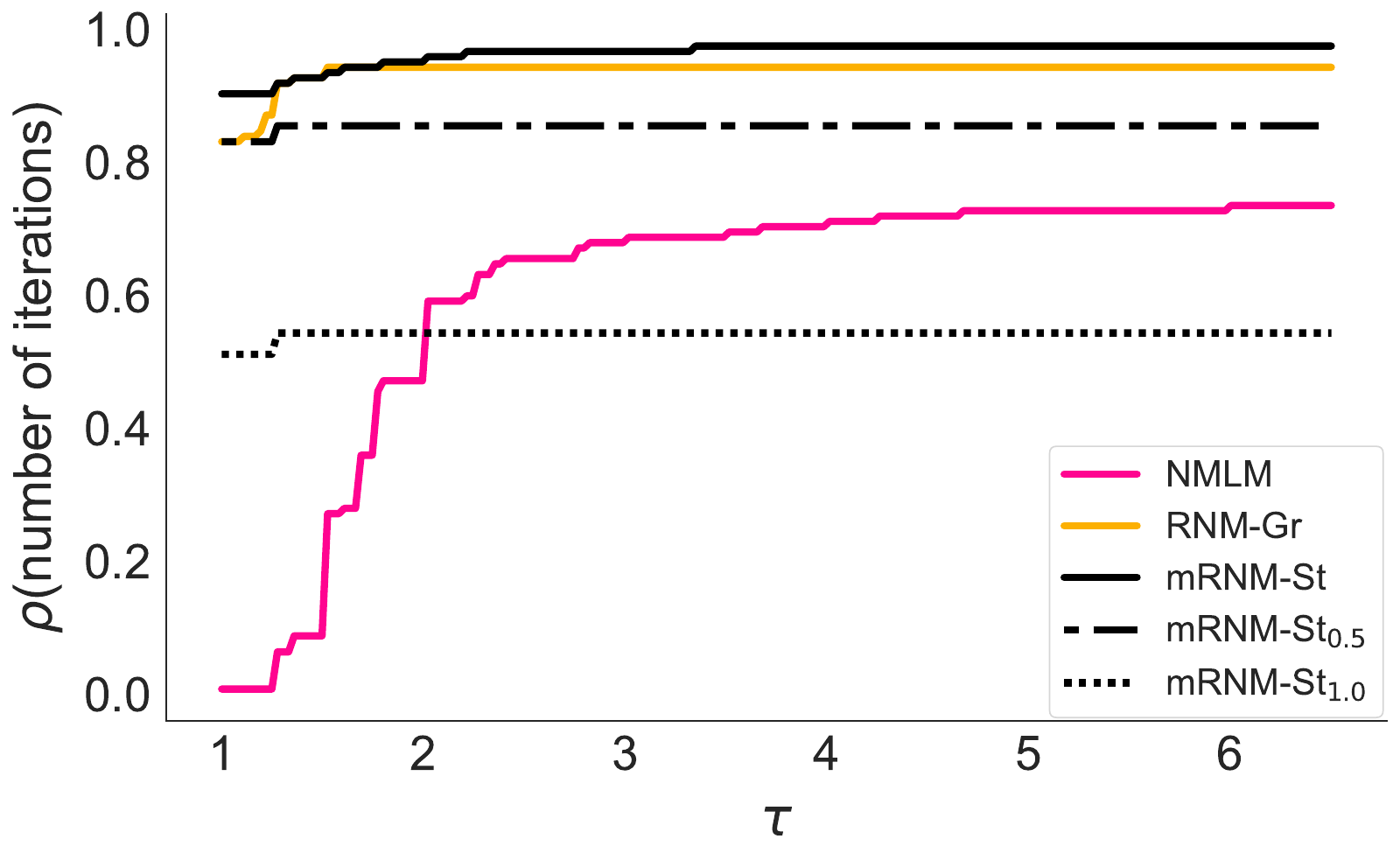"}
    \label{fig:perf_prof_niter}
\end{figure}

\subsection{Characterizing the convergence neighborhood} \label{sec:conv_neigh_water}

The statistical results and the performance profile discussed in the previous sections show a clear advantage for the Riemannian methods.
However, these results strongly depend on the choice of initial guess.
To decouple convergence performance from the initial guess, we will make use of the fact that there exists a convergence neighborhood around a critical point such that, if the starting point lies within this region, Newton's Method is guaranteed to converge to that critical point (see \cite{galantai2000, tanabe1985}).
Explicitly describing such neighborhoods is very difficult, especially when the domain of the cost function is a high-dimensional Riemannian manifold.
Therefore, we take a numerical approach to estimate and compare the convergence neighborhoods of the methods under study.
This allows us to determine which method has the largest convergence region, which in turn suggests greater robustness, since convergence does not necessarily require starting close to the solution.

We now describe the approach in a general form, as it may be applied in other settings.
Let $(X, \inner{\cdot}{\cdot})$ be a complete Riemannian manifold, let $f: X \to \br$ be a smooth cost function, and let $x_*$ be a critical point of $f$, meaning $\riemgrad{f}(x_*) = 0$.
Consider an iterative algorithm $A$ that, given $x_0 \in X$, produces a sequence $\{x_k \in X : k \in \bn\}$ which \emph{may} converge to $x_*$.
For a fixed small $\eps > 0$, we define the number of iterations for $A$ to converge to $x_*$ as the smallest $n \in \bn$ such that $\norm{\riemgrad{f}(x_n)} < \eps$.
Note that this is consistent with the convergence criterion used previously, where $\eps = 10^{-8}$.
Taking all of this into account, we aim to describe the following set:
\begin{equation}
    V_{x_*} \ceq \left\{x \in X :
    \begin{array}{c}
        \text{$x = \exp_{x_*}(v)$ for some $v \in T_{x_*}X$ and $A$ converges to $x_*$} \\
        \text{if it starts at $\exp_{x_*}(tv)$ for $t \in [0,1]$}
    \end{array}
    \right\}.
\end{equation}

This set $V_{x_*}$ can be interpreted as the largest star-shaped set centered at $x_*$ such that, if the algorithm $A$ starts at any $x \in V_{x_*}$, it converges to $x_*$.
While the true convergence region may not be star-shaped, in this work we will focus on this simplified region.
We also examine how the number of iterations $n$ varies as a function of $x$.
To estimate this region numerically, we consider an orthonormal basis $\{u_i\}_{i=1}^n$ for $T_{x_*}X$, and define $\{v_j\} \ceq \{u_i\}_{i=1}^n \cup \{-u_i\}_{i=1}^n$ as the set of directions we will move along to obtain $V_{x_*}$.
Once we have $v_j$, we consider several values of $t > 0$ and calculate $x = \exp_{x_*}(tv_j)$ to use as the initial guess of algorithm $A$.

Figure~\ref{fig:h2o_conv_neigh} depicts the convergence neighborhood for the water molecule.\footnote{Similar plots for all molecules in the dataset are provided in the Supporting Information.}
Each bar corresponds to one basis vector and the optimization was performed for increasing values of $t$, stepping by $0.05$ in both positive and negative directions from the critical point at $t = 0$.
The critical points were obtained using RNM-Gr and, for the cases in which this method did not converge, we used mRNM-St.
There was one molecule, ClNO, that none of the methods converged and we excluded it from the following analysis.
Note that the plot is not symmetric, i.e., the convergence radius can differ for positive and negative $t$.
The color encodes the number of iterations needed to achieve convergence.
To better understand the plot, let us focus on the left-most bar in the RNM-Gr panel.
This bar corresponds to the geodesic emanating from the critical point in the direction of the first basis vector.\footnote{We fix an order for the basis as indicated in Session~\ref{sec:hessian}.}
RNM-Gr converges to the critical point for all starting points along this geodesic with $-0.25 < t < 0.45$.
Within the interval $|t| \leq 0.1$, convergence occurs in 3 iterations.
For $|t| < 0.25$, it takes 4 iterations, and so on.
Each vertical bar corresponds to a different direction, and the interpretation is similar.
An interesting behavior is that the method converges in 5 iterations for $t = 0.3$, but in 4 iterations for both $t = 0.35$ and $t = 0.4$.
We do not have an explanation for this phenomenon and it occurs for all methods across the dataset.

Figure~\ref{fig:h2o_conv_neigh} also shows that RNM-Gr and mRNM-St have similar, but not identical, convergence neighborhoods, while NMLM's neighborhood is remarkably different.
Comparing the two Riemannian methods, we see that mRNM-St has a larger convergence neighborhood.
While both methods perform similarly for $|t| < 0.3$, RNM-Gr starts to fail at $t \approx 0.4$, whereas mRNM-St continues to converge until about $|t| \approx 0.6$.
For mRNM-St, there is a wide region where the method requires more than 5 iterations to converge, whereas RNM-Gr typically fails once more than 5 iterations are needed (at least for the water molecule).
NMLM often requires 4 or more iterations even for small values of $|t| = 0.1$, and it requires 6 or more iterations already at $|t| \approx 0.3$.
As will be shown next, the average value of $t$ (across directions) for which NMLM converges is slightly smaller than for mRNM-St.
Nevertheless, there are directions along which NMLM converges for surprisingly large values of $t$.

To quantify the size of each convergence neighborhood, we consider the following metrics.
For each direction vector $v_i$, let $R_i$ be the largest $t$ for which the algorithm converges to the critical point.
We define the \emph{minimum convergence radius} $R_{\mathrm{min}}$, the \emph{average convergence radius} $R_{\mathrm{avg}}$, and the \emph{maximum convergence radius} $R_{\mathrm{max}}$ as the minimum, average, and maximum of the set $\{R_i\}$, respectively.
It is worth pointing out that $R_{\mathrm{max}}$ is a best-case scenario metric and that $R_{\mathrm{min}}$ and $R_{\mathrm{avg}}$ reflect better the actual convergence neighborhood.
We will report all values for the sake of completeness, though.

Table~\ref{tab:R_conv_H2O} presents the metrics defined above for the water molecule.
In this particular case, RNM-Gr exhibits the worst performance, with the smallest values in all metrics.
On the other hand, mRNM-St and NMLM are on par:
mRNM-St has a slightly higher $R_{\mathrm{min}}$ while NMLM has a way larger $R_{\mathrm{max}}$.
We can see that NMLM in this case is competitive with the Riemannian methods.
However, as shown in the next subsection, its performance is less robust when evaluated across the entire dataset.

\begin{figure}
    \caption{
        Convergence neighborhoods of RNM-Gr, mRNM-St, and NMLM for the water molecule.
        Vertical bars correspond to directions in the tangent space at the critical point, rows represent increasing step sizes along that direction.
        Colors indicate the number of iterations required for convergence.
    }
    \includegraphics[width=\linewidth]{"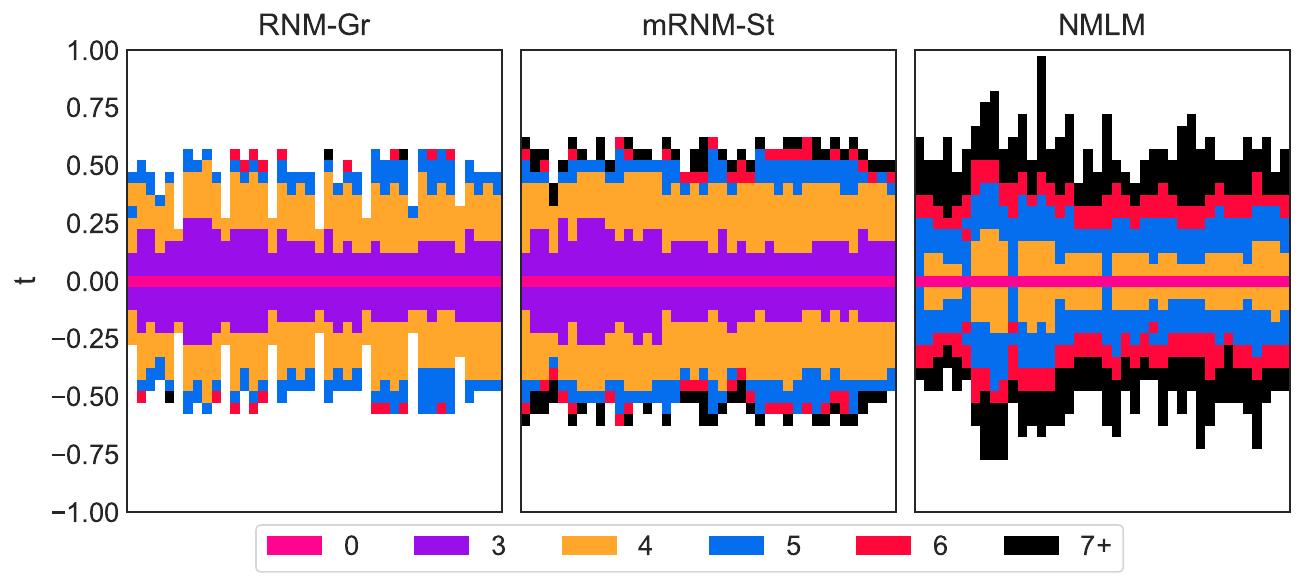"}
    \label{fig:h2o_conv_neigh}
\end{figure}

\begin{table}[h]
    \caption{
        Minimum ($R_{\mathrm{min}}$), average ($R_{\mathrm{avg}}$), and maximum ($R_{\mathrm{max}}$) convergence radii across directions for the water molecule.
    }
    \centering
    \begin{tabularx}{\textwidth}{l >{\raggedleft\arraybackslash}X >{\raggedleft\arraybackslash}X >{\raggedleft\arraybackslash}X}
        \toprule
        Method & $R_{\mathrm{min}}$ & $R_{\mathrm{avg}}$ & $R_{\mathrm{max}}$ \\
        \midrule
        NMLM    & 0.35 & 0.55  & 0.95 \\
        RNM-Gr  & 0.2  & 0.449 & 0.55 \\
        mRNM-St & 0.4  & 0.55  & 0.6  \\
        \bottomrule
    \end{tabularx}
    \label{tab:R_conv_H2O}
\end{table}

\subsection{Analyzing the convergence radii}

In this subsection, we analyze the convergence radii across the entire dataset.
Figure~\ref{fig:all_conv_radii} shows the minimum, average, and maximum convergence radii for each molecule individually, while Figure~\ref{fig:boxplot_all_conv_radius} is a boxplot with the distribution of these radii across the entire dataset.

For the average convergence radius,  $R_{\mathrm{avg}}$, the median of mRNM-St is $0.54$, which is approximately $0.1$ higher than those of RNM-Gr ($0.46$) and NMLM ($0.43$).
That said, although the medians of RNM-Gr and NMLM are close, NMLM exhibits higher variance, indicating that its performance is substantially more sensitive to the specific problem at hand.

When considering the minimum convergence radius, NMLM has a median $R_{\mathrm{min}}$ of $0$, revealing that, in most cases, there exists at least one direction in which the method fails to converge.
In contrast, RNM-Gr and mRNM-St have medians of $0.15$ and $0.05$, respectively.
Both Riemannian methods thus outperform NMLM, demonstrating that, even for unfavorable initial directions, convergence can be achieved as long as the starting point is sufficiently close to the critical point.
It is worth noting that $R_{\mathrm{min}}$ reflects a worst-case scenario.
Moreover, since Newton's Method is not a globally convergent algorithm and is typically used in the final iterations of an optimization procedure, these results suggest that Riemannian methods are more reliable for guiding the last iterations to convergence.

For the maximum convergence radius, $R_{\mathrm{max}}$, NMLM stands out with a median of $0.8$, surpassing RNM-Gr ($0.6$) and mRNM-St ($0.65$).
This indicates that, for each molecule, there is at least one direction in which NMLM converges for relatively large values of $t$, suggesting that NMLM outperform the Riemannian methods in the best-case scenario.

We also computed performance profiles based on the convergence radii.
Before discussing these results, though, it is important to clarify that, unlike traditional performance profiles~\cite{dolan2002}, where lower values indicate better performance, in this case larger convergence radii are preferable.
Therefore, we invert the following ratio:
\begin{equation} \label{eq:perf_prof_r_inv}
  r_{p,s} = \frac{\text{maximum convergence radius among all $s$ for $p$ fixed}}
{\text{convergence radius for molecule $p$ and method $s$}}.
\end{equation}
With this definition, $r_{p,s} = 1$ corresponds to the best case and higher values indicate worse performance.
For instance, $r_{p,s} = 2$ means that the convergence radius for molecule $p$ using method $s$ is half that of the best-performing method for the same molecule.

Figure~\ref{fig:perfprof_all_conv_radius} shows the performance profiles for the minimum, average, and maximum convergence radii across the dataset.
The results once again confirm that, for both $R_{\mathrm{min}}$ and $R_{\mathrm{avg}}$, NMLM is outperformed by the Riemannian methods.
We can see that approximately $80\%$ of the molecules have at least one direction in which NMLM does not converge.
In summary, NMLM is less robust than the Riemannian methods, as evidenced by its higher sensitivity to the specific problem (reflected in the high variance of $R_{\mathrm{avg}}$) and to the choice of initial guess.

\begin{figure}[h!]
  \caption{
    Minimum, average, and maximum convergence radii for each molecule and method.
  }
  \includegraphics[width=1.0\textwidth]{"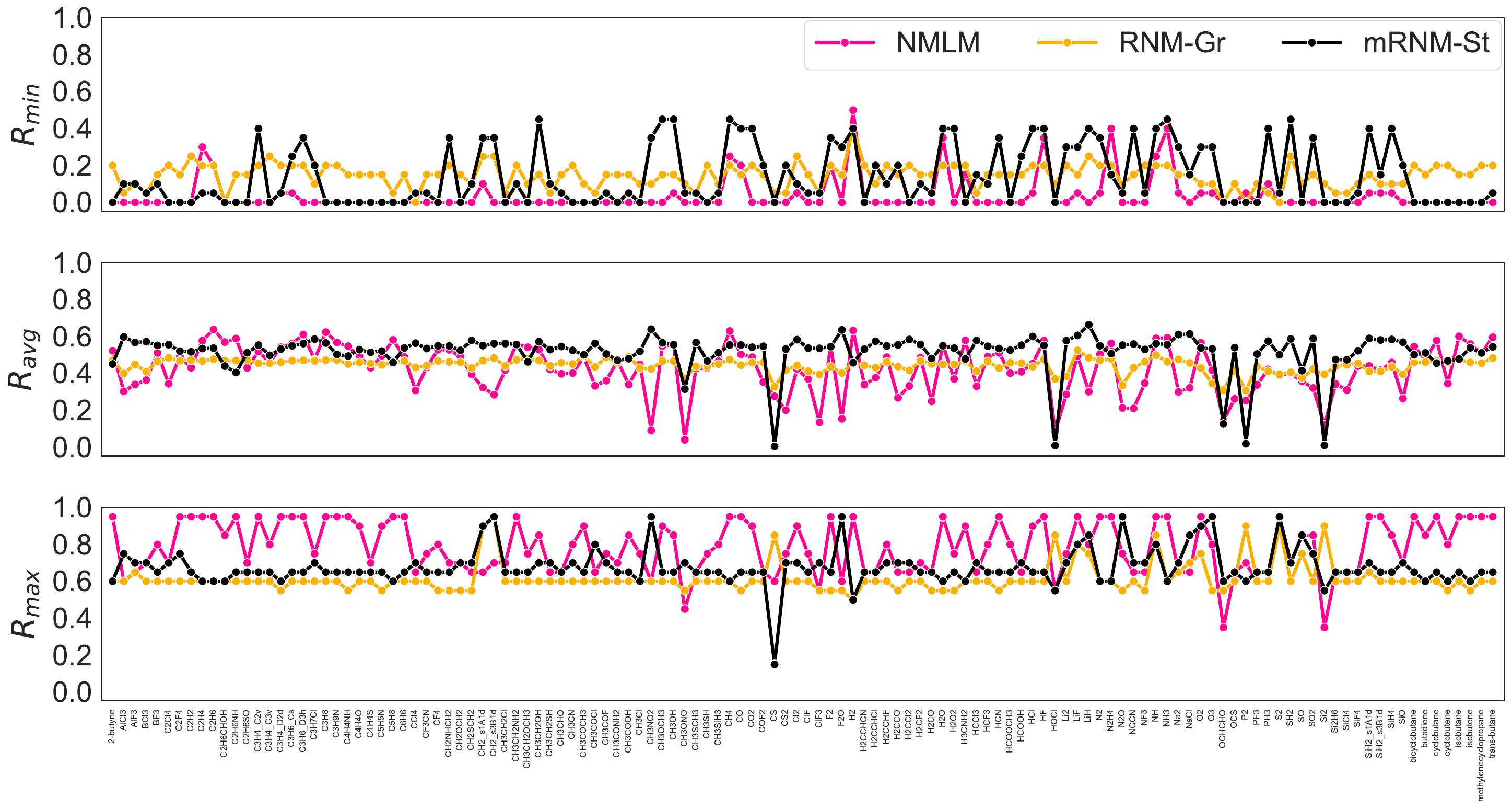"}
  \label{fig:all_conv_radii}
\end{figure}

\begin{figure}[h!]
  \caption{
    Boxplot showing the distribution of minimum, average, and maximum convergence radii ($R$) across all molecules in the dataset for each method (NMLM, RNM-Gr, and mRNM-St).
    The central line in each box indicates the median, the box represents the interquartile range, and the whiskers show the data range excluding outliers.
  }
  \includegraphics[width=1.0\textwidth]{"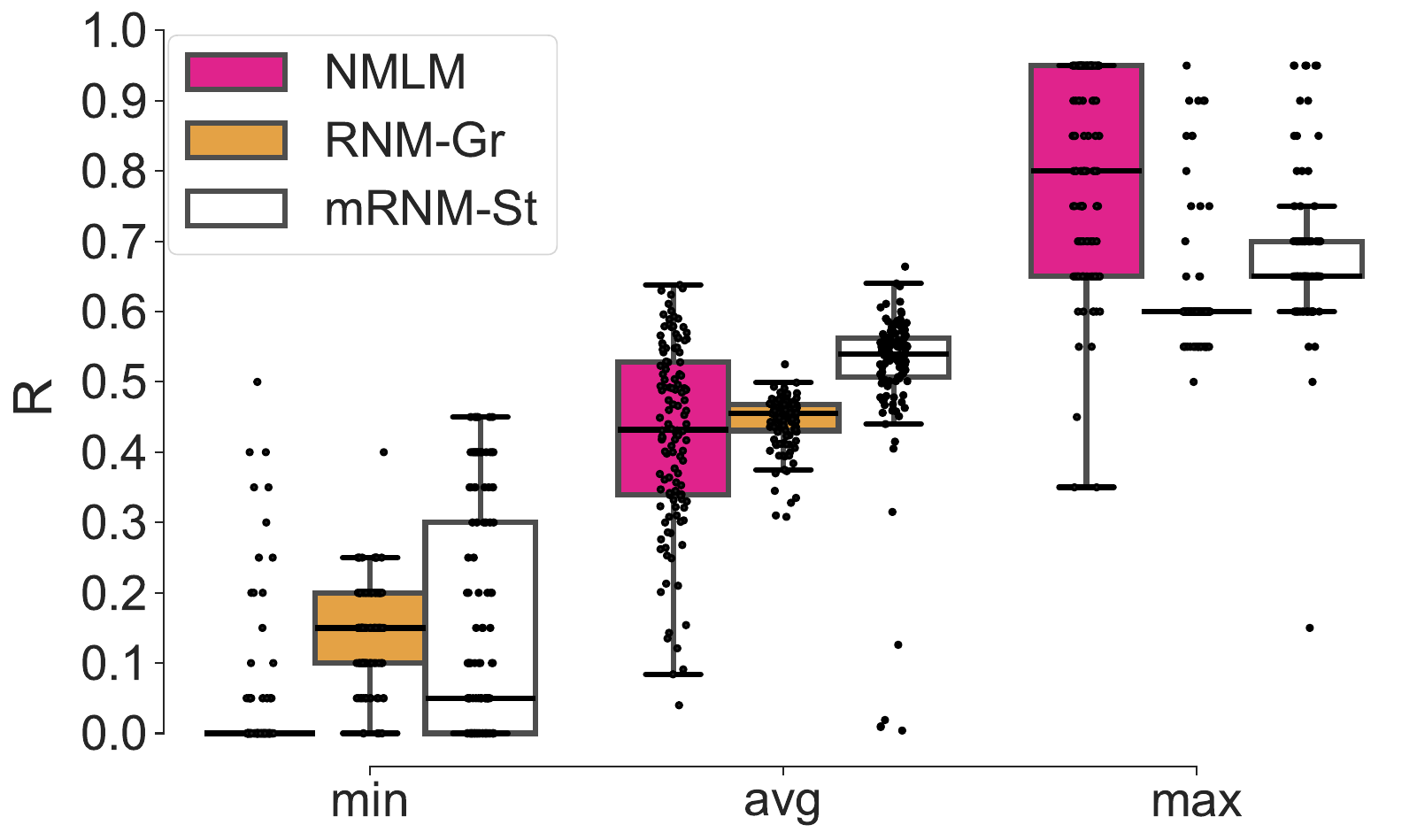"}
  \label{fig:boxplot_all_conv_radius}
\end{figure}

\begin{figure}[h!]
  \caption{
    Performance profiles comparing the minimum, average, and maximum convergence radii of NMLM, RNM-Gr, and mRNM-St.
    The $y$-axis is shared across columns, while the $x$-axes differ.
  }
  \includegraphics[width=1.0\textwidth]{"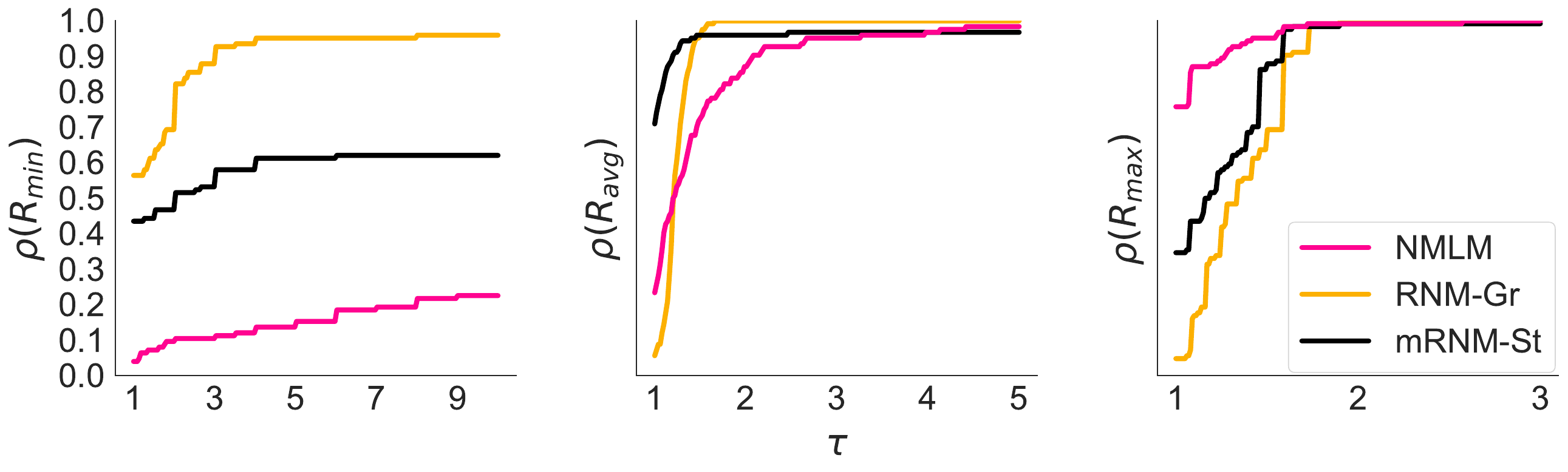"}
  \label{fig:perfprof_all_conv_radius}
\end{figure}

\section{Conclusions and final remarks}

This work presented a thorough numerical study of the Riemannian Newton's Method, with two main goals:
to compare its performance against the method of Lagrange multipliers, and to better understand the convergence issues that arise when the objective function is defined on a quotient manifold.

We carried out the experiments on the Hartree–Fock optimization problem from Quantum Chemistry, which can be formulated on both the Stiefel manifold and the Grassmannian.
Computations were performed on 125 molecules from the G2/97 dataset.
To evaluate the performance of the methods, we considered two metrics:
the percentage of cases that successfully converged and the number of iterations required to converge.
All methods started from the same initial guess.
We also examined the convergence behavior by numerically estimating the convergence radii of each method.
The results show that the Riemannian methods outperform the Lagrange multipliers method across all evaluation criteria.

Although it is well known that lifting a cost function from a quotient manifold to the total space can lead to numerical issues (since the function is invariant under equivalence classes), we showed that these issues can be mitigated by solving Newton’s Equation in the subspace spanned by the Hessian’s eigenvectors with eigenvalues above a small threshold.
This is relevant because it is often easier to formulate and implement Newton’s Method on a total space than on a quotient manifold.
Moreover, the modified Newton's Method on the Stiefel manifold not only fixes the convergence problem, but it also slightly outperforms Newton’s Method on the Grassmannian.

\begin{acknowledgements}

The authors thank the Dean's Office for Research of UFABC for providing the research facilities, and to the Coordena\c{c}{\~a}o de Aperfei\c{c}oamento de Pessoal de N{\'i}vel Superior (CAPES), Finance Code 001.
This research has been supported by grants \#2017/21199-0, \#2018/04617-6,
and \#2020/04891-0 from the S{\~a}o Paulo Research Foundation (FAPESP).

\end{acknowledgements}

\bibliographystyle{spmpsci}
\bibliography{sn-bibliography}

\end{document}